\documentclass[12pt]{article}

\usepackage[utf8]{inputenc}
\usepackage[T1]{fontenc}
\usepackage{amsmath,amssymb,amsthm}
\usepackage{mathtools}
\usepackage{enumerate}
\usepackage[margin=2.5cm]{geometry}
\usepackage{hyperref}
\usepackage{graphicx}
\usepackage{booktabs}

\newtheorem{theorem}{Theorem}[section]
\newtheorem{proposition}[theorem]{Proposition}
\newtheorem{lemma}[theorem]{Lemma}
\newtheorem{corollary}[theorem]{Corollary}

\theoremstyle{definition}
\newtheorem{definition}[theorem]{Definition}
\newtheorem{example}[theorem]{Example}
\theoremstyle{remark}
\newtheorem{remark}[theorem]{Remark}

\newcommand{\One}{\textsc{One}}
\newcommand{\All}{\textsc{All}}

\title{Optimal strategies in the all-heads coin game}
\author{Peter Pfaffelhuber\thanks{%
  Albert-Ludwigs-Universit\"at Freiburg, Freiburg, Germany.
  Email: \href{mailto:p.p@stochastik.uni-freiburg.de}{p.p@stochastik.uni-freiburg.de}.
  ORCID: \href{https://orcid.org/0000-0002-6421-5460}{0000-0002-6421-5460}.}}
\date{\today}

\begin{document}
\maketitle

\begin{abstract}
  We study a sequential coin-flipping game: a player starts with
  $n$~coins, each heads with probability~$p$, and in each round flips
  all remaining coins and must set aside at least one head, losing if
  none shows.  The player wins once all coins have been set aside.
  The optimal winning probability~$w_{n,p}$ obeys a Bellman equation
  with a nonlinear suffix-maximum operator.  For $p=\tfrac12$ every
  strategy achieves $w_{n,1/2}=\tfrac12$.  For $p>\tfrac12$ the
  strategy~\One{} (set aside a single head) is optimal,
  $n\mapsto w_{n,p}$ is strictly increasing, and the limit
  $W(p):=\lim_n w_{n,p}$ has an explicit series representation with
  $p\le W(p)<1$.  For $p<\tfrac12$ near~$\tfrac12$ we give a
  first-order perturbation expansion in $\delta:=\tfrac12-p$: the
  deficit satisfies
  $\tfrac12-w_{n,\,1/2-\delta}\approx\delta\,c_n$, where $c_n$ obeys a
  linear recursion for $n\ge7$ with limit $L\approx1.7035$.  To first
  order the optimal-value sequence has a strict local minimum at
  $n=5$ and no local maximum.
\end{abstract}

\noindent\textbf{Keywords:} coin-flipping game; Markov decision
process; Bellman equation; perturbation expansion; formal
verification; Lean~4.

\noindent\textbf{2020 Mathematics Subject Classification:}
90C40; 60C05; 91A60; 68V20.

\section{Introduction}\label{sec:intro}

\paragraph{A note on authorship, and disclosure of generative AI use.}
All material accompanying this paper is collected in the public
repository \emph{coins} at \url{https://github.com/pfaffelh/coins};
the file and directory names used below
(\texttt{simulation/}, \texttt{CoinsLean/}, and so on) refer to that
repository.
In accordance with Taylor \& Francis policy, the author discloses that
a generative AI tool---Anthropic's Claude (versions Opus~4.6,
Opus~4.7, and Opus~4.8)---was used, interactively and in conversations
with the author, to produce the mathematical text, the numerical code
(\texttt{simulation/}), and the complete Lean~4/Mathlib formalization
presented here.  To be precise about attribution: the underlying
mathematical ideas, the selection of the research question
(alternative objective $w_{n,p}$ vs.\ $v_{n,p}$), the decision to
carry out a first-order perturbation in $\delta=\tfrac12-p$, the
structuring of \S\ref{sec:below} as a joint induction, and the
decision to formally verify the paper are all the author's.
Claude's role was execution: drafting the exposition from the
author's outline, proposing and debugging proof tactics in Lean,
selecting Mathlib lemmas, spotting where a generic approach had to
be replaced by a case-by-case one (for instance the cubic-bound
inequality in \S\ref{sec:joint}, or the Tannery-style dominated
convergence argument in the proof of Theorem~\ref{thm:limit}), and
producing the numerical scripts.  The reason for using the tool was
thus to accelerate drafting, formalization, and numerical exploration
under the author's direction; individual decisions in both categories
were settled through back-and-forth dialogue, and the author reviewed
every edit.  The generative AI tool is not, and cannot be, listed as
an author, since it cannot take responsibility for the work or manage
copyright and licensing; the author takes full responsibility for the
content, including any remaining errors.  This responsibility is
backed by an unusually strong safeguard: the correctness of every
numbered result is independently machine-checked by the
Lean~4/Mathlib formalization, which uses no \texttt{sorry} and only
the three standard foundational axioms \texttt{propext},
\texttt{Classical.choice}, and \texttt{Quot.sound}.

The headline statements that have been formalized are collected,
together with the seven shared definitions on which they depend, in
\href{https://github.com/pfaffelh/coins/blob/main/CoinsLean/Challenge.lean}{\texttt{CoinsLean/\allowbreak Challenge.lean}}
and
\href{https://github.com/pfaffelh/coins/blob/main/CoinsLean/CoinsLean/Defs.lean}{\texttt{CoinsLean/\allowbreak CoinsLean/\allowbreak Defs.lean}};
together these two files are the entire trust surface of the
formalization.  A referee can then independently verify the proofs
mechanically by running the Lean comparator~\cite{LeanComparator}
on the public repository --- see Appendix~\ref{app:lean} for
instructions.  Readers who want the full record can consult a
prompt-by-prompt transcript at
\href{https://github.com/pfaffelh/coins/blob/main/journal.md}{\texttt{journal.md}};
a more granular development history of the formalization appears
in Appendix~\ref{app:lean}.

\subsection{The game}

Fix a probability $p\in(0,1)$ and a positive integer~$n$.  A player
is given $n$~coins, each of which independently lands heads with
probability~$p$ and tails with probability $q:=1-p$.  The game
proceeds in rounds.  In each round:
\begin{enumerate}[(i)]
\item All remaining coins are flipped simultaneously.
\item If no coin shows heads, the game is lost.
\item Otherwise the player must set aside at least one coin that
  shows heads.  The set-aside coins leave the game permanently.
\item If every coin has been set aside, the game is won.
  Otherwise, a new round begins with the remaining coins.
\end{enumerate}
The player's objective is to maximise the probability of winning.

Two natural strategies present themselves:
\begin{itemize}
\item \textbf{Strategy~\One:} set aside exactly one head when
  $1\le k\le n-1$ heads appear, and set aside all~$n$ when $k=n$.
\item \textbf{Strategy~\All:} set aside every head that appears
  ($i=k$).
\end{itemize}
Strategy~\One{} is conservative: it preserves as many coins as
possible for future rounds, betting that the one set-aside coin has
already ``done its job.''  Strategy~\All{} is greedy: it cashes in
every success immediately.  Neither is universally optimal.

\paragraph{Relation to prior work.}
\sloppy A close variant of this game --- building on an earlier
question posed by Joachim~Breitner on Mathematics Stack
Exchange~\cite{Breitner2015} --- was introduced and studied by
van~Doorn~\cite{vanDoorn2024}, who took as objective the
\emph{expected number of heads} collected, $v_{n,p}$, rather than
the all-heads-wins probability $w_{n,p}$ studied here.  Van~Doorn's
analysis is essentially complete for $p\ge\tfrac12$: he proves
that there exist absolute constants $\phi=(\sqrt5-1)/2\approx
0.618$ and $p_0\approx 0.5495$ such that the optimal strategy
follows~\One{} (in his notation, ``A'') for $p\ge\phi$, switches
between~\One{} and~\All{} at a level $n(p)$ for $p_0<p<\phi$, and
follows~\All{} (``B'') throughout for $\tfrac12\le p\le p_0$.  For
$p<\tfrac12$, however, van~Doorn writes that ``everything is a lot
less clear'' and restricts himself to numerical conjectures and a
non-monotonicity result.

The present paper makes two changes in emphasis.  First, the
objective: maximising the all-heads probability $w_{n,p}$ rather
than the expected count $v_{n,p}$. As a consequence, the $p>\tfrac12$ case becomes simpler: 
the strategy~\One{} is always optimal --- a fact noted in passing
in~\cite[\S 10]{vanDoorn2024} and proved here in
Theorem~\ref{thm:above} below with a self-contained joint
induction. Second, and more importantly, our focus is the regime
$p<\tfrac12$ that van~Doorn explicitly leaves open.  We carry out
a first-order perturbation in $\delta=\tfrac12-p$ and obtain a
\emph{closed-form description} of the leading behaviour: the
deficit coefficient $c_n$ satisfies a linear recursion
(Proposition~\ref{prop:linear}), the limit $L=\lim c_n$ admits an
explicit formula and a numerical value $L\approx
1.7035$ (Theorem~\ref{thm:limit}), and the local-extremum
structure of $n\mapsto w_{n,p}$ is fully determined to first order
(Corollary~\ref{cor:shape}).

A third, methodological, change concerns formal verification.
Since the results of the present paper were generated in collaboration with AI,
we want to avoid hallucinations: the reader does not have to be a specialist in Lean~4, 
the corresponding proof assistant, but has to acknowledge the fact that (i) the recursive definition
for $w_{n,p}$ from \eqref{eq:bellman} is correctly implemented and (ii) the results as they stand 
compile without error, meaning that Lean~4 has checked every proof. 
More precisely, every numbered result
of the present paper --- including the perturbation analysis of
\S\ref{sec:below} --- has been formally verified in Lean~4 using
the Mathlib library, with no \texttt{sorry}s, no custom axioms,
and no \texttt{native\_decide} or \texttt{unsafe} blocks.  A
line-by-line correspondence between the manuscript and the formal
proofs is provided in Appendix~\ref{app:lean}.

\subsection{Markov decision processes and the Bellman equation}

The game is a finite-state \emph{Markov decision process} (MDP): the
state is the number~$m$ of coins remaining, the action is the number
$i\in\{1,\dots,k\}$ of heads to set aside (where $k$ is the random
number of heads), and the transition is
$m\mapsto m-i$.  There are two absorbing states: \emph{win}
($m=0$, all coins collected) and \emph{lose} (reached when $k=0$ in
some round).  A \emph{policy} (or \emph{strategy})~$\pi$ specifies,
for each state~$n$ and each observed number~$k\in\{1,\dots,n\}$ of
heads, the number $i_\pi(n,k)\in\{1,\dots,k\}$ of heads to set
aside; we write $v_n^\pi$ for the resulting winning probability
starting from state~$n$.

Because every policy absorbs with probability at least
$p^m+(1-p)^m>0$ per round, the MDP is a \emph{stochastic
shortest-path} (SSP) problem in the sense of Bertsekas and Tsitsiklis
\cite{BertsekisTsitsiklis1991}: the Bellman operator is a contraction
and the optimal value function is its unique fixed point.

\begin{definition}[Optimal winning probability]\label{def:w}
  Let $w_{0,p}:=1$.  For $n\ge 1$ define
  \begin{equation}\label{eq:bellman}
    w_{n,p} \;:=\; p^n + \sum_{j=1}^{n-1}\binom{n}{j}p^{n-j}q^{j}\,
    \max_{j\le m\le n-1} w_{m,p},
  \end{equation}
  where $q=1-p$ and the empty sum (at $n=1$) gives
  $w_{1,p}=p$.
\end{definition}

The term $p^n$ accounts for the event that all $n$~coins land heads
(an immediate win).  Each summand with $j\ge 1$ accounts for the
event that exactly~$j$ coins land tails; the player then chooses to
keep some number $m\in\{j,\dots,n-1\}$ of coins (setting aside
$n-m$ heads), and the game continues from state~$m$ with optimal
value~$w_{m,p}$.

The Bellman equation~\eqref{eq:bellman} is the standard optimality
equation for this SSP--MDP.  For general background on Markov decision
processes and the Bellman equation we refer to the monographs of
Puterman~\cite{Puterman1994}, Bertsekas~\cite{Bertsekas2017}, and
Stokey, Lucas and Prescott~\cite{StokeyLucas1989}.

\subsection{Strategies \One{} and \All}

Under strategy~\One{} the player always keeps $m=n-1$ coins
(except when $k=n$), so the Bellman recursion collapses to
\begin{equation}\label{eq:a-rec}
  a_{n,p} \;=\; p^n + \bigl(1-p^n-(1-p)^n\bigr)\,a_{n-1,p},
  \qquad a_{0,p}:=1.
\end{equation}
Under strategy~\All{} the player always keeps $m=j$ coins (the
number of tails), giving
\begin{equation}\label{eq:b-rec}
  b_{n,p} \;=\; p^n + \sum_{j=1}^{n-1}\binom{n}{j}p^{n-j}q^{j}\,
  b_{j,p},
  \qquad b_{0,p}:=1.
\end{equation}
Note that the recursions for both~$a$ and~$b$ are \emph{linear}
(no $\max$ operator), which makes them analytically more tractable
than the Bellman equation~\eqref{eq:bellman}.

\subsection{Outline}

In Section~\ref{sec:fair} we prove that at $p=\tfrac12$ the optimal
winning probability is $w_{n,1/2}=\tfrac12$ for every~$n$, and that
in fact every reasonable strategy achieves this value.  In
Section~\ref{sec:above} we show that for $p>\tfrac12$ the
strategy~\One{} is always optimal and $n\mapsto w_{n,p}$ is strictly
increasing; as a consequence (\S\ref{sec:above-limit}) the limit
$W(p)=\lim_n w_{n,p}$ exists and admits a closed-form series
representation.  Section~\ref{sec:below} treats the case
$p<\tfrac12$ near~$\tfrac12$: we introduce a first-order
perturbation in $\delta=\tfrac12-p$, derive a recursion for the
leading coefficient~$c_n$, prove that it simplifies to a linear
recursion for $n\ge 7$, and compute the limit $L=\lim c_n$.
Section~\ref{sec:numerics} complements the first-order analysis
(and so restricts to $p<\tfrac12$) with numerical experiments that
illustrate the shape of $n\mapsto w_{n,p}$ away from $p=\tfrac12$,
including the emergence of local maxima for $p$ bounded away
from~$\tfrac12$ (invisible in the first-order expansion), and
Section~\ref{sec:discussion}
collects concluding observations and open questions.
Appendix~\ref{app:lean}
describes the formal verification of all results in Lean~4 and
provides a line-level map between the manuscript and the
formalization on GitHub.

\section{The fair coin: \texorpdfstring{$p=\tfrac12$}{p = 1/2}}
\label{sec:fair}

\begin{theorem}\label{thm:half}
  For every $n\ge 1$, $w_{n,1/2}=\tfrac12$.  More generally, for any
  policy~$\pi$ satisfying $i_\pi(n,n)=n$ for every~$n$ (i.e.\ the
  player accepts the immediate win when all coins land heads), the
  winning probability is $v_n^\pi=\tfrac12$.

  In particular, both strategies~\One{} and~\All{} achieve the
  optimum: $a_{n,1/2}=b_{n,1/2}=w_{n,1/2}=\tfrac12$.
\end{theorem}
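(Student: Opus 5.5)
The plan is a strong induction on $n$ for an arbitrary policy~$\pi$ with $i_\pi(m,m)=m$ for all $m$. The value $w_{n,1/2}=\tfrac12$ then falls out of the Bellman equation directly. Write $v_0^\pi:=1$. For $n\ge1$, conditioning on the outcome of the first flip gives
\[
  v_n^\pi \;=\; 2^{-n}\cdot 1 \;+\; 2^{-n}\cdot 0 \;+\; \sum_{k=1}^{n-1}\binom{n}{k}2^{-n}\, v^\pi_{n-i_\pi(n,k)}.
\]
The three terms correspond to all heads (an immediate win, since $i_\pi(n,n)=n$), all tails (a loss), and $1\le k\le n-1$ heads with $i_\pi(n,k)\in\{1,\dots,k\}$ set aside. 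In the last case the new state $n-i_\pi(n,k)$ lies in $\{n-k,\dots,n-1\}\subseteq\{1,\dots,n-1\}$. The key observation is that the next state is never $0$ in these cases, so the inductive hypothesis applies to it.

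Assume $v_m^\pi=\tfrac12$ for $1\le m\le n-1$, the empty range covering $n=1$. Then every middle summand equals $\tfrac12\binom nk 2^{-n}$, and
\[
  v_n^\pi = 2^{-n} + \tfrac12\cdot 2^{-n}\bigl(2^n-2\bigr) = 2^{-n}+\tfrac12-2^{-n}=\tfrac12 .
\]
This closes the induction. For $n=1$ the formula reads $v_1^\pi=\tfrac12$ directly.

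For the optimum, apply the same induction to~\eqref{eq:bellman} at $p=\tfrac12$. For $1\le j\le n-1$ the inner maximum ranges over $m\in\{j,\dots,n-1\}$, all of which are $\ge1$, so by hypothesis it equals $\tfrac12$. The same computation then gives $w_{n,1/2}=\tfrac12$. Similarly, \eqref{eq:a-rec} gives $a_n=2^{-n}+(1-2^{1-n})\tfrac12=\tfrac12$, and \eqref{eq:b-rec} involves only $b_j$ with $1\le j\le n-1$, so the same argument gives $b_n=\tfrac12$. Equivalently, \One{} and \All{} are policies satisfying $i_\pi(n,n)=n$, so both are instances of the general claim.

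The only delicate point is justifying the one-step decomposition of $v_n^\pi$ for a general policy, in particular the strong Markov property. Since policies here are stationary maps $(n,k)\mapsto i$ and the state strictly decreases each round, the game ends after at most $n$ rounds. The decomposition is therefore an elementary finite conditioning argument, with no need for the SSP contraction machinery. If history-dependent policies were allowed, I would apply the same induction to the conditional winning probability given any history, using the same bound on the number of rounds.
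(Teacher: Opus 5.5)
Your proof is correct and follows essentially the same route as the paper. Both use a strong induction driven by the identity $\sum_{k=1}^{n-1}\binom{n}{k}2^{-n}=1-2^{1-n}$, applied to the Bellman recursion (where every suffix maximum equals $\tfrac12$) and to the one-step decomposition of $v_n^\pi$, using that $n-i_\pi(n,k)\in\{1,\dots,n-1\}$. Your explicit treatment of \One{} and \All{} and your remarks justifying the decomposition are welcome additions rather than a different argument.
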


\begin{proof}
  We use the binomial identity
  \begin{equation}\label{eq:bin-half}
    \sum_{j=1}^{n-1}\binom{n}{j}\Bigl(\frac{1}{2}\Bigr)^{n}
    \;=\; 1-2^{1-n}.
  \end{equation}

  \emph{Step~1.}  We show $w_{n,1/2}=\tfrac12$ by strong induction.
  For $n=1$: $w_{1,1/2}=\tfrac12$.  For $n\ge 2$, suppose
  $w_{m,1/2}=\tfrac12$ for all $1\le m\le n-1$.  Then every
  suffix-maximum equals~$\tfrac12$, so
  \begin{align*}
    w_{n,1/2}
    &= 2^{-n} + \frac{1}{2}\sum_{j=1}^{n-1}\binom{n}{j}2^{-n} \\
    &= 2^{-n} + \tfrac{1}{2}\bigl(1-2^{1-n}\bigr) \\
    &= 2^{-n} + \tfrac{1}{2} - 2^{-n}
    \;=\; \tfrac{1}{2}.
  \end{align*}

  \emph{Step~2.}  For a general policy~$\pi$ with
  $i_\pi(n,n)=n$, conditioning on the number of heads
  $K\sim\mathrm{Bin}(n,\tfrac12)$ gives
  \[
    v_n^\pi = 2^{-n}\cdot 1 + \sum_{k=1}^{n-1}\binom{n}{k}2^{-n}\,
    v_{n-i_\pi(n,k)}^\pi.
  \]
  Since $1\le i_\pi(n,k)\le k\le n-1$ for $k<n$, the subscript
  $n-i_\pi(n,k)$ lies in $\{1,\dots,n-1\}$, and by strong induction
  each $v_{n-i_\pi(n,k)}^\pi=\tfrac12$.  The same calculation as in
  Step~1 gives $v_n^\pi=\tfrac12$.
\end{proof}

\begin{remark}\label{rem:refuse}
  A policy that refuses the immediate win at $k=n$ (i.e.\
  $i_\pi(n,n)<n$) achieves strictly less than~$\tfrac12$: the
  contribution of the $k=n$ event drops from $2^{-n}$ to
  $2^{-n}\cdot v_{n-i_\pi(n,n)}^\pi < 2^{-n}$, while all other terms
  remain at most $\tfrac12\cdot\binom{n}{k}2^{-n}$, giving
  $v_n^\pi<\tfrac12$.
\end{remark}

\section{Above the fair coin: \texorpdfstring{$p>\tfrac12$}{p > 1/2}}
\label{sec:above}

\begin{theorem}\label{thm:above}
  For $p>\tfrac12$ and all $n\ge 2$:
  \begin{enumerate}[\rm(i)]
  \item $w_{n,p}>w_{n-1,p}$, i.e.\ $n\mapsto w_{n,p}$ is strictly
    increasing.
  \item $w_{n-1,p}<\dfrac{p^n}{p^n+q^n}$.
  \item Strategy~\One{} is optimal: $w_{n,p}=a_{n,p}$.
  \end{enumerate}
\end{theorem}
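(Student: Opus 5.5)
The plan is a single joint strong induction on $n\ge2$ proving (ii), (iii) and (i) together. The key observation is that once $w_{1,p}<w_{2,p}<\dots<w_{n-1,p}$ is known, every suffix maximum in \eqref{eq:bellman} collapses: for each $1\le j\le n-1$ we get $\max_{j\le m\le n-1}w_{m,p}=w_{n-1,p}$. The index $m=0$ never occurs, so $w_{0,p}=1$ plays no role. Then \eqref{eq:bellman} becomes
\[
  w_{n,p}=p^n+\Bigl(\sum_{j=1}^{n-1}\binom nj p^{n-j}q^j\Bigr)w_{n-1,p}
  =p^n+\bigl(1-p^n-q^n\bigr)w_{n-1,p},
\]
which is exactly the recursion \eqref{eq:a-rec}. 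Combined with the inductive hypothesis $w_{n-1,p}=a_{n-1,p}$, this yields (iii), $w_{n,p}=a_{n,p}$.

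Next, subtracting $w_{n-1,p}$ from this identity gives
\[
  w_{n,p}-w_{n-1,p}=p^n-(p^n+q^n)\,w_{n-1,p}.
\]
So (i) at level $n$ is equivalent to (ii) at level $n$, namely $w_{n-1,p}<r_n$, where $r_n:=p^n/(p^n+q^n)$. Hence the real content is to propagate (ii), and I would carry the induction hypothesis as: $w_{1,p}<\dots<w_{n-1,p}$, $w_{n-1,p}<r_n$, and $w_{n-1,p}=a_{n-1,p}$.

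For the base case $n=2$ we have $w_{1,p}=p$. The inequality $p<p^2/(p^2+q^2)$ is equivalent to $p^2+q^2<p$, i.e.\ $q^2<pq$, i.e.\ $q<p$, which holds because $p>\tfrac12$. The monotonicity hypothesis is vacuous here, and (i) and (iii) at $n=2$ follow as above.

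For the inductive step of (ii), I would write $w_{n,p}=f(w_{n-1,p})$ with the affine map $f(x)=p^n+(1-p^n-q^n)x$. This map has positive slope $1-p^n-q^n>0$ for $n\ge2$, so $w_{n-1,p}<r_n$ gives $w_{n,p}<f(r_n)$. Moreover $r_n$ is exactly the fixed point of $f$, since $(1-p^n-q^n)r_n=r_n-p^n$; hence $f(r_n)=r_n$ and $w_{n,p}<r_n$. Finally $r_n=1/\bigl(1+(q/p)^n\bigr)$ is strictly increasing in $n$ because $q/p<1$, so $w_{n,p}<r_n<r_{n+1}$, which is (ii) at level $n+1$.

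I do not expect a genuine obstacle. The one point needing care is the bookkeeping of the joint induction: monotonicity up to $n-1$ is what licenses replacing the suffix maximum, and that replacement must be done before invoking the fixed-point comparison. Recognizing $r_n$ as the fixed point of the \One{} recursion is what makes (ii) propagate cleanly.
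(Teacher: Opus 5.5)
Your proposal is correct and follows the paper's proof. It uses the same joint induction, the same collapse of the suffix maximum to $w_{n-1,p}$ (which yields the \One{} recursion), and the same equivalence of (i) with the bound $w_{n-1,p}<p^n/(p^n+q^n)$. Your observation that $p^n/(p^n+q^n)$ is the fixed point of the increasing affine map $x\mapsto p^n+(1-p^n-q^n)x$ is a clean way to carry out what the paper calls a ``direct computation'' for propagating (ii); the base case of (iii) needs only the trivial check $a_{1,p}=p=w_{1,p}$.
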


\begin{proof}
  Note that $n\mapsto \dfrac{p^n}{p^n+q^n} = \dfrac{1}{1+(q/p)^n}$ is increasing for $p \in (\tfrac 12, 1)$. 
  We prove (i) and (ii) simultaneously by strong induction on~$n$.

  \emph{Base case $n=2$.}
  One computes $w_{1,p}=p$ and
  $w_{2,p}=p^2+(1-p^2-q^2)p = 3p^2-2p^3$.  Then
  $w_{2,p}-w_{1,p}=p(1-p)(2p-1)>0$ since $p >\tfrac12$.
  Also $w_{1,p}=p = p/(p + q) < p^2/(p^2+q^2)$.

  \emph{Inductive step.}  Assume (i) and (ii) hold for all indices up
  to~$n-1$.  Since $n\mapsto w_{n,p}$ is increasing on
  $\{1,\dots,n-1\}$ by hypothesis, the suffix-maximum
  $\max_{j\le m\le n-1}w_{m,p}=w_{n-1,p}$ for every
  $j\in\{1,\dots,n-1\}$.  Hence the Bellman
  equation~\eqref{eq:bellman} reduces to
  \[
    w_{n,p} = p^n + (1-p^n-q^n)\,w_{n-1,p},
  \]
  which is exactly the recursion~\eqref{eq:a-rec} for
  strategy~\One.  This proves~(iii).

  For~(i), $w_{n,p}>w_{n-1,p}$ iff
  $p^n(1-w_{n-1,p})>q^n w_{n-1,p}$, i.e.\
  $w_{n-1,p}<p^n/(p^n+q^n)$, which is hypothesis~(ii) at level
  $n-1$.

  For~(ii) at level~$n$, we need $w_{n,p}<p^{n+1}/(p^{n+1}+q^{n+1})$.
  Using the recursion and the inductive bound on~$w_{n-1,p}$, a
  direct computation shows this reduces to the inequality
  $(q/p)^n<(q/p)^{n-1}$, which holds since $q/p<1$.
\end{proof}

\subsection{The limit \texorpdfstring{$W(p)=\lim_{n\to\infty}w_{n,p}$}{W(p) = lim w(n,p)}}\label{sec:above-limit}

The collapse of the Bellman suffix-maximum, which is the central
content of Theorem~\ref{thm:above}, has a useful by-product: the
Bellman recursion for $w_{n,p}$ is genuinely \emph{linear} for
every $n\ge 1$.

\begin{corollary}[Linear recursion above $\tfrac12$]\label{cor:above-lin}
  For $p>\tfrac12$ and every $n\ge 1$,
  \begin{equation}\label{eq:above-lin}
    w_{n,p} \;=\; p^n + \bigl(1-p^n-q^n\bigr)\,w_{n-1,p},
  \end{equation}
  with $w_{0,p}=1$.
\end{corollary}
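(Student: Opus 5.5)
The plan is to read the corollary off Theorem~\ref{thm:above}, treating $n\ge2$ and $n=1$ separately.

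For $n\ge 2$, part~(i) of Theorem~\ref{thm:above}, applied at every level $2,\dots,n-1$, shows that $m\mapsto w_{m,p}$ is strictly increasing on $\{1,\dots,n-1\}$. Hence for each $j\in\{1,\dots,n-1\}$ the suffix maximum is attained at the right endpoint:
\[
\max_{j\le m\le n-1} w_{m,p}=w_{n-1,p}.
\]
Substituting this into the Bellman equation~\eqref{eq:bellman} gives
\[
w_{n,p}=p^n+w_{n-1,p}\sum_{j=1}^{n-1}\binom{n}{j}p^{n-j}q^j .
\]
The binomial theorem evaluates the sum as $(p+q)^n-p^n-q^n=1-p^n-q^n$, which yields \eqref{eq:above-lin}. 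Equivalently, one can combine part~(iii), $w_{n,p}=a_{n,p}$ and $w_{n-1,p}=a_{n-1,p}$, with the defining recursion~\eqref{eq:a-rec}. The $n-1=1$ subcase (i.e.\ $n=2$) needs no monotonicity, since the maximum is over the single index $m=1$.

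For $n=1$, the sum in \eqref{eq:bellman} is empty, so $w_{1,p}=p$. The right-hand side of \eqref{eq:above-lin} is $p+(1-p-q)\cdot w_{0,p}=p+0\cdot 1=p$, so the identity holds here too.

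There is no real obstacle. The only points needing care are the boundary cases ($n=1$, where Theorem~\ref{thm:above} does not apply, and $n=2$) and checking that monotonicity on all of $\{1,\dots,n-1\}$, not just at the top step, is what collapses every suffix maximum.
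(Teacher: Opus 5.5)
Your proposal is correct and matches the paper's proof: you handle $n=1$ by direct evaluation using $p+q=1$, and for $n\ge2$ you collapse every suffix maximum to $w_{n-1,p}$ via the monotonicity in Theorem~\ref{thm:above}(i) and then apply the binomial identity $\sum_{j=1}^{n-1}\binom{n}{j}p^{n-j}q^j=1-p^n-q^n$. In your alternative route through part~(iii), the case $n=2$ needs $w_{1,p}=a_{1,p}$, which (iii) does not cover (it starts at $n\ge2$), but this holds trivially since both equal $p$.
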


\begin{proof}
  For $n=1$ the sum in~\eqref{eq:bellman} is empty and
  $w_{1,p}=p=p^1+(1-p-q)\cdot 1$ since $p+q=1$.  For $n\ge 2$,
  combine the suffix-max collapse from the proof of
  Theorem~\ref{thm:above} with the binomial identity
  $\sum_{j=1}^{n-1}\binom{n}{j}p^{n-j}q^j=1-p^n-q^n$.
\end{proof}

The linear recursion~\eqref{eq:above-lin} is the analogue, for
$p>\tfrac12$, of the linear $c_n$-recursion of
Proposition~\ref{prop:linear} below.  It is structurally
\emph{simpler}: it holds for every $n\ge 1$ (no preliminary
joint-induction step is needed), the coefficients are explicit
polynomials in $p$, and the bounded monotone sequence converges
trivially by the Weierstrass criterion.

\begin{theorem}[Limit and explicit formula above $\tfrac12$]
  \label{thm:above-limit}
  For every $p\in(\tfrac12,1)$ the limit
  $W(p):=\lim_{n\to\infty}w_{n,p}$ exists, satisfies
  $p\le W(p)<1$, and admits the closed-form representation
  \begin{equation}\label{eq:above-W-formula}
    W(p) \;=\; \sum_{k=1}^\infty p^k\!\!\!\prod_{j=k+1}^\infty\!\!
            \bigl(1-p^j-q^j\bigr).
  \end{equation}
  The series converges absolutely at geometric rate.
\end{theorem}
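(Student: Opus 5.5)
We prove the statement in three steps: existence of the limit and the bounds, an explicit formula for $w_{n,p}$, and passage to the limit in that formula.

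\emph{Existence and bounds.} By Theorem~\ref{thm:above}(i) the sequence $(w_{n,p})_{n\ge1}$ is strictly increasing. By Theorem~\ref{thm:above}(ii) it is bounded above by $p^{n+1}/(p^{n+1}+q^{n+1})<1$, or more simply by~$1$, since each $w_{n,p}$ is a probability. Hence $W(p)$ exists by monotone convergence. Since $w_{1,p}=p$ and the sequence increases, $W(p)\ge p$.

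\emph{Unrolling the recursion.} Set $r_j:=1-p^j-q^j\in[0,1)$. Iterating Corollary~\ref{cor:above-lin} from $w_{0,p}=1$ gives, by induction on~$n$,
\[
  w_{n,p}=\sum_{k=1}^{n}p^k\prod_{j=k+1}^{n}r_j\;+\;\prod_{j=1}^{n}r_j .
\]
Since $r_1=1-p-q=0$, the last term vanishes for $n\ge1$. The induction step is simply $w_{n,p}=p^n+r_n w_{n-1,p}$: multiplying the sum for $n-1$ by $r_n$ extends each product to~$n$, and the term $p^n$ is the $k=n$ summand with an empty product.

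\emph{Passage to the limit.} Write $P_{k,n}:=\prod_{j=k+1}^n r_j$. Because $\sum_j(p^j+q^j)<\infty$ and each $r_j>0$ for $j\ge2$, the infinite product $P_k:=\prod_{j>k}r_j$ converges to a strictly positive number, and $P_{k,n}\downarrow P_k$ as $n\to\infty$. The terms satisfy $0\le p^kP_{k,n}\mathbf 1_{k\le n}\le p^k$ with $\sum_k p^k<\infty$. Tannery's theorem (dominated convergence for series) then gives $W(p)=\sum_{k\ge1}p^kP_k$, which is \eqref{eq:above-W-formula}. The domination $p^kP_k\le p^k$ shows absolute convergence at geometric rate~$p$. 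This interchange is the only genuinely analytic step. The one subtlety is justifying convergence of the infinite products, which we handle via $\log r_j\ge -C(p^j+q^j)$ for $j\ge2$: this holds because $r_j\ge r_2=2pq>0$ there.

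\emph{Strict upper bound $W(p)<1$.} We need more than $w_{n,p}<1$, since strict inequalities can be lost in the limit. From the formula,
\[
  1-W(p)=\sum_{k}p^k(1-P_k)+\Bigl(1-\sum_k p^k\Bigr).
\]
Since $\sum_{k\ge1}p^k=p/q>1$ for $p>\tfrac12$, this decomposition is not directly useful. Instead we use the telescoping identity $1-w_{n,p}=(1-w_{n-1,p})r_n+q^n w_{n-1,p}\cdot$(sign), or more simply we write
\[
  1-w_{n,p}=r_n(1-w_{n-1,p})+q^n .
\]
This gives $1-w_{n,p}\ge r_n(1-w_{n-1,p})$, hence $1-w_{n,p}\ge (1-w_{2,p})\prod_{j=3}^n r_j$. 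Letting $n\to\infty$ yields $1-W(p)\ge (1-w_{2,p})P_2>0$, because $w_{2,p}<1$ and $P_2>0$. The main point to check carefully is this positivity of the infinite product, which is the same ingredient as in the convergence step.
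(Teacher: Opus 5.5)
Your proof is correct and follows the paper's route: monotone convergence gives existence and $W(p)\ge w_{1,p}=p$; you unroll Corollary~\ref{cor:above-lin}, with $r_1=0$ killing the initial term; and you apply Tannery's theorem with dominator $p^k$. For $W(p)<1$ you use the same companion recursion $1-w_{n,p}=r_n(1-w_{n-1,p})+q^n$ as the paper, but only as the inequality $1-W(p)\ge(1-w_{2,p})P_2>0$ rather than summing it to the exact series $1-W(p)=\sum_k q^kP_k$, which spares a second limit interchange; the exploratory lines before that (the unused decomposition of $1-W(p)$ and the garbled identity with the ``(sign)'' factor) should simply be deleted.
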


\begin{proof}
  Existence follows from Theorem~\ref{thm:above}~(i) (monotone,
  bounded above by~$1$).  The lower bound $W(p)\ge w_{1,p}=p$ is
  immediate from monotonicity; combined with $p>\tfrac12$ this
  gives $W(p)>\tfrac12$.

  For the explicit formula, iterate~\eqref{eq:above-lin} from
  $k=1$ up to $k=n$:
  \begin{equation}\label{eq:W-finite-iter}
    w_{n,p} \;=\; \underbrace{\prod_{k=1}^n\!\bigl(1-p^k-q^k\bigr)
                               \cdot w_{0,p}}_{=\,0}
    \;+\; \sum_{k=1}^n p^k\!\prod_{j=k+1}^n\!\bigl(1-p^j-q^j\bigr),
  \end{equation}
  where the first product vanishes because its $k=1$ factor is
  $1-p-q=0$.  For $j\ge 2$ the factors
  $1-p^j-q^j=\sum_{i=1}^{j-1}\binom{j}{i}p^{j-i}q^i$ lie in $(0,1)$
  (positive from the $i=1$ term; strictly less than $1$ since we
  subtracted $p^j+q^j>0$).  Together with
  $\sum_{j\ge 2}(p^j+q^j)<\infty$, the partial products
  $\prod_{j=k+1}^n(1-p^j-q^j)$ converge as $n\to\infty$ to a
  strictly positive limit $\Pi_k\in(0,1)$, and the resulting
  series $\sum_k p^k\Pi_k$ converges absolutely by comparison with
  $\sum_k p^k$.  By Tannery's theorem
  (cf.~Knopp~\cite[\S 41]{Knopp1990}) with dominator $p^k$, limit
  and summation commute, giving~\eqref{eq:above-W-formula}.

  It remains to show $W(p)<1$.  Set $U(p):=1-W(p)$ and
  $u_n:=1-w_{n,p}$; subtracting~\eqref{eq:above-lin} from~$1$ and
  using $p^n+q^n+(1-p^n-q^n)=1$ gives the companion recursion
  $u_n=q^n+(1-p^n-q^n)u_{n-1}$ with $u_0=0$.
  Iterating identically to~\eqref{eq:W-finite-iter} gives the
  analogue
  \[
    U(p) \;=\; \sum_{k=1}^\infty q^k\Pi_k,
  \]
  which is a strictly positive sum (every term is positive: $q>0$
  and $\Pi_k>0$).  Hence $W(p)=1-U(p)<1$.
\end{proof}

\begin{remark}[Parallel with Theorem~\ref{thm:limit}]\label{rem:above-vs-below}
  The iterated identity~\eqref{eq:W-finite-iter} and its limit
  formula~\eqref{eq:above-W-formula} have the same structure as
  the finite iteration~\eqref{eq:cn-finite-iter} and the
  formula~\eqref{eq:L-formula} for~$L$ below: in both cases a
  linear recursion of the form $x_n=A_n+(1-B_n)\,x_{n-1}$ is
  iterated and Tannery's theorem identifies the resulting limit
  as $\lim x_{n_0-1}\prod(1-B_j)+\sum A_k\prod(1-B_j)$, with
  dominator $\sum|A_k|<\infty$.  The two differ only in the
  coefficients: for $W(p)$, $A_k=p^k$ and $B_k=p^k+q^k$; for
  $L$, $A_k=A_{\mathrm{lin}}(k)$ and $B_k=B_{\mathrm{lin}}(k)$
  from Proposition~\ref{prop:linear}, and the structural step
  leading to the linear recursion is trivial above $\tfrac12$
  (immediate from monotonicity) but requires the joint induction
  of \S\ref{sec:joint} below.
\end{remark}

\begin{figure}[h]
  \centering
  \includegraphics[width=0.78\linewidth]{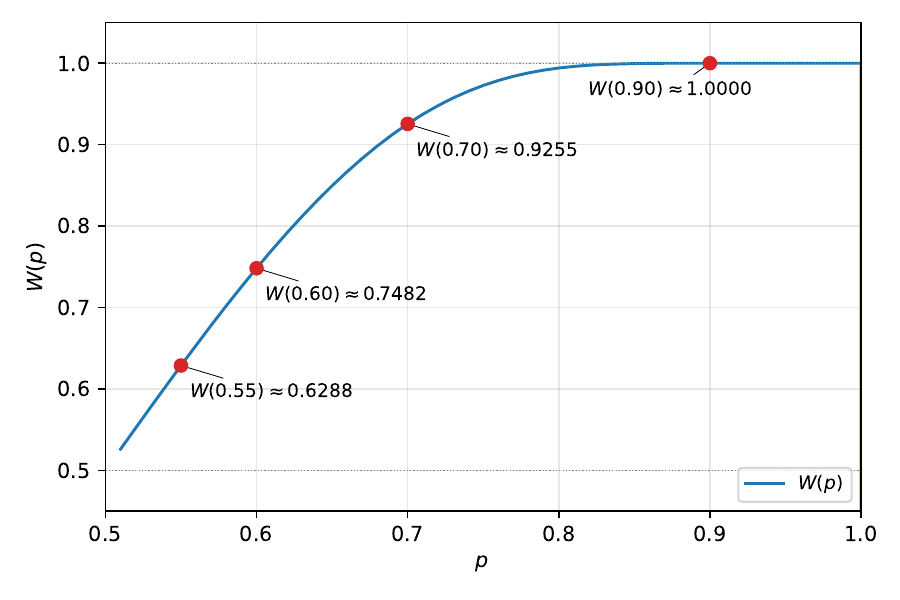}
  \caption{The limit $W(p)=\lim_{n\to\infty}w_{n,p}$ as a function
    of~$p$, for $p\in(\tfrac12,1)$.  Computed via the recursion
    of Corollary~\ref{cor:above-lin} iterated to $n=400$ in
    $80$-digit arithmetic.  Selected values:
    $W(0.55)\approx 0.6288$, $W(0.6)\approx 0.7482$,
    $W(0.7)\approx 0.9255$, $W(0.9)\approx 0.99998$.  The limit
    is monotone in~$p$, bounded below by the identity line (since
    $W(p)\ge p$) and strictly below~$1$ by
    Theorem~\ref{thm:above-limit}.}
  \label{fig:W-above}
\end{figure}

\section{Below the fair coin: perturbation in
  \texorpdfstring{$\delta=\tfrac12-p$}{delta = 1/2 - p}}
\label{sec:below}

For $p<\tfrac12$ the structure of $n\mapsto w_{n,p}$ is far richer:
numerical experiments show that the sequence develops local minima
and maxima, and the optimal strategy is neither~\One{}
nor~\All{} in general; see \cite{vanDoorn2024}, Section~10, and
Section~\ref{sec:numerics} of the present paper for illustrations.

To get analytical traction, we study the regime
$p=\tfrac12-\delta$ with $\delta>0$ small.

\subsection{The deficit and its recursion}

\begin{definition}\label{def:deficit}
  For $p\in(0,1)$ and $n\ge 1$, define the
  \emph{deficit}
  $\Delta_{n,p}:=\tfrac12-w_{n,p}$.
\end{definition}

By Theorem~\ref{thm:half}, $\Delta_{n,1/2}=0$ for every $n\ge 1$.
At $n=0$ the convention $w_{0,p}:=1$ (``no coins left'' is a
trivial win) yields $\Delta_{0,p}=-\tfrac12$ for every~$p$; this
value will reappear in Proposition~\ref{prop:delta-rec}.

\begin{proposition}\label{prop:delta-rec}
  The deficit satisfies
  \begin{equation}\label{eq:delta-rec}
    \Delta_{n,p} \;=\;
    \frac{q^n-p^n}{2}
    + \sum_{j=1}^{n-1}\binom{n}{j}p^{n-j}q^{j}\,
    \min_{j\le m\le n-1}\Delta_{m,p},
  \end{equation}
  with $\Delta_{0,p}=-\tfrac12$.
  In particular:
  \begin{enumerate}[\rm(i)]
  \item For $p<\tfrac12$: $\Delta_{n,p}>0$ for every $n\ge 1$
    (i.e.\ $w_{n,p}<\tfrac12$).
  \item For $p>\tfrac12$: $\Delta_{n,p}<0$ for every $n\ge 1$
    (i.e.\ $w_{n,p}>\tfrac12$).
  \end{enumerate}
\end{proposition}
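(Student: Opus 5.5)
We first derive the recursion~\eqref{eq:delta-rec} algebraically from the Bellman equation~\eqref{eq:bellman}, and then obtain (i) and (ii) by strong induction on~$n$.

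\emph{Derivation of the recursion.}  Substitute $w_{m,p}=\tfrac12-\Delta_{m,p}$ into~\eqref{eq:bellman}.  The map $x\mapsto\tfrac12-x$ is decreasing, so
\[
\max_{j\le m\le n-1}w_{m,p}=\tfrac12-\min_{j\le m\le n-1}\Delta_{m,p}.
\]
Write $S_n:=\sum_{j=1}^{n-1}\binom{n}{j}p^{n-j}q^j$.  By the binomial identity already used in Corollary~\ref{cor:above-lin}, $S_n=1-p^n-q^n$.  Then
\[
\Delta_{n,p}=\tfrac12-p^n-\tfrac12 S_n+\sum_{j=1}^{n-1}\binom{n}{j}p^{n-j}q^j\min_{j\le m\le n-1}\Delta_{m,p}.
\]
The constant part simplifies to $\tfrac12-p^n-\tfrac12(1-p^n-q^n)=\tfrac{q^n-p^n}{2}$.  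At $n=1$ the sum is empty, and the formula gives $\Delta_{1,p}=\tfrac{q-p}{2}=\tfrac12-p$, which agrees with $w_{1,p}=p$.  The value $\Delta_{0,p}=-\tfrac12$ is just the convention $w_{0,p}=1$.  Note that only indices $m\ge j\ge1$ enter the minimum, so $\Delta_0$ never appears inside it.

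\emph{Sign claims.}  For (i), take $p<\tfrac12$, so $q>p$ and $q^n-p^n>0$ for every $n\ge1$.  We use strong induction on $n\ge1$.
\begin{itemize}
\item Base case: $\Delta_{1,p}=\tfrac12-p>0$.
\item Inductive step: assume $\Delta_{m,p}>0$ for all $1\le m\le n-1$.  Every suffix minimum in~\eqref{eq:delta-rec} ranges over such $m$, so it is positive.  The binomial weights $\binom{n}{j}p^{n-j}q^j$ are positive because $p,q\in(0,1)$.  Hence $\Delta_{n,p}$ is a strictly positive constant term plus a nonnegative (indeed positive) sum, so $\Delta_{n,p}>0$.
\end{itemize}

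For (ii), take $p>\tfrac12$.  The same induction applies with all signs reversed.  Now $q^n-p^n<0$, and if every $\Delta_{m,p}<0$ for $1\le m\le n-1$, then every suffix minimum is negative, giving $\Delta_{n,p}<0$.  Alternatively, (ii) follows from Theorem~\ref{thm:above}~(i), which gives $w_{n,p}\ge w_{1,p}=p>\tfrac12$.

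\emph{Where care is needed.}  No step is genuinely hard.  The one point to check is that the minimum only involves indices $m\ge1$, so the negative value $\Delta_{0,p}=-\tfrac12$ never enters and cannot break the sign argument.  One must also simplify the constant term correctly using the identity $S_n=1-p^n-q^n$.
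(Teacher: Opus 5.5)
Your proposal is correct and matches the paper's proof: substitute $w_{m,p}=\tfrac12-\Delta_{m,p}$, turn the suffix maximum into a suffix minimum, simplify the constant term via $\sum_{j=1}^{n-1}\binom{n}{j}p^{n-j}q^j=1-p^n-q^n$, and obtain the signs by strong induction on $n$. Your two extra points are both correct. First, $\Delta_{0,p}$ never enters the minimum. Second, (ii) also follows from Theorem~\ref{thm:above}. Neither is needed for the argument.
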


\begin{proof}
  Substitute $w_{m,p}=\tfrac12-\Delta_{m,p}$ into the
  Bellman equation~\eqref{eq:bellman}.  Using
  $\max_m(\tfrac12-\Delta_m) = \tfrac12 - \min_m\Delta_m$ and the
  binomial identity $\sum_{j=1}^{n-1}\binom{n}{j}p^{n-j}q^j
  = (p+q)^n - p^n - q^n = 1 - p^n - q^n$:
  \begin{align*}
    \tfrac12 - \Delta_{n,p}
    &= p^n + \sum_{j=1}^{n-1}\binom{n}{j}p^{n-j}q^j
       \Bigl(\tfrac12 - \min_{j\le m\le n-1}\Delta_{m,p}\Bigr) \\
    &= p^n + \tfrac12\bigl(1 - p^n - q^n\bigr)
       - \sum_{j=1}^{n-1}\binom{n}{j}p^{n-j}q^j\min_{j\le m\le n-1}\Delta_{m,p} \\
    &= \tfrac12 + \tfrac{p^n - q^n}{2}
       - \sum_{j=1}^{n-1}\binom{n}{j}p^{n-j}q^j\min_{j\le m\le n-1}\Delta_{m,p},
  \end{align*}
  and rearranging gives~\eqref{eq:delta-rec}.
  The sign claims follow by induction using $q^n-p^n>0$ (resp.\ $<0$)
  and the positivity (resp.\ negativity) of the $\min$ terms.
\end{proof}

\subsection{First-order expansion}

Set $p=\tfrac12-\delta$ with $\delta>0$ small.  Then
$q=\tfrac12+\delta$, and the constant term in~\eqref{eq:delta-rec} is
\[
  \frac{q^n-p^n}{2}
  = \frac{(\tfrac12+\delta)^n-(\tfrac12-\delta)^n}{2}
  = \frac{n\delta}{2^{n-1}} + O(\delta^3).
\]
The binomial weights become $\binom{n}{j}2^{-n}+O(\delta)$, and the
$\min$ terms are $O(\delta)$ by Proposition~\ref{prop:delta-rec}.
Thus $\Delta_{n,p}=O(\delta)$.

\begin{proposition}[First-order coefficient]\label{prop:cn}
  Write $\Delta_{n,p}=c_n\delta+O(\delta^2)$ as $\delta\to 0^+$.
  Then $c_1=1$ and for $n\ge 2$:
  \begin{equation}\label{eq:cn-rec}
    c_n \;=\; \frac{n}{2^{n-1}}
    + \frac{1}{2^n}\sum_{j=1}^{n-1}\binom{n}{j}\,
    \min_{j\le m\le n-1}c_m.
  \end{equation}
\end{proposition}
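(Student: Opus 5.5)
The plan is a strong induction on $n$. The inductive statement is: $\Delta_{m,p}=c_m\delta+O(\delta^2)$ as $\delta\to0^+$ for every $1\le m\le n-1$, with $c_m$ defined by~\eqref{eq:cn-rec}. Implicit in the statement is that the expansion exists, so I would prove existence and the recursion at the same time. Since every $w_{n,p}$ is built from polynomials in $p$ by finitely many $\max$ operations, each $\Delta_{n,1/2-\delta}$ is a continuous, piecewise-polynomial function of $\delta$, and it vanishes at $\delta=0$ by Theorem~\ref{thm:half}. This already suggests an expansion of the form $c_n\delta+O(\delta^2)$, with a one-sided derivative at $0^+$. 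I would nonetheless carry out the induction explicitly rather than rely on this heuristic.

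\emph{Base case.} For $n=1$ we have $\Delta_{1,p}=\tfrac12-p=\delta$ exactly, so $c_1=1$.

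\emph{Inductive step.} For $n\ge2$, start from Proposition~\ref{prop:delta-rec} with $p=\tfrac12-\delta$ and expand the three ingredients of~\eqref{eq:delta-rec}.
\begin{enumerate}[(a)]
\item The constant term satisfies $\tfrac12(q^n-p^n)=n\delta/2^{n-1}+O(\delta^3)$, by the binomial expansion already given in the text.
\item Each weight $\binom{n}{j}p^{n-j}q^j$ equals $\binom{n}{j}2^{-n}+O(\delta)$, being a polynomial in $\delta$.
\item For the minima, the induction hypothesis gives $\Delta_{m,p}=c_m\delta+r_m(\delta)$ with $|r_m(\delta)|\le K\delta^2$ for small $\delta$, uniformly over the finitely many $m<n$. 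Since $\min$ is $1$-Lipschitz in the sup norm, $\bigl|\min_{j\le m\le n-1}\Delta_{m,p}-\delta\min_{j\le m\le n-1}c_m\bigr|\le K\delta^2$. Positive homogeneity, $\min(\delta c_m)=\delta\min c_m$ for $\delta>0$, is exactly why only $\delta\to0^+$ is claimed.
\end{enumerate}

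Multiplying out, each product of a weight and a minimum is
\[
\bigl(\tbinom{n}{j}2^{-n}+O(\delta)\bigr)\bigl(\delta\min c_m+O(\delta^2)\bigr)=\tbinom{n}{j}2^{-n}\delta\min c_m+O(\delta^2).
\]
Summing over the finitely many $j$ and adding the constant term yields $\Delta_{n,p}=c_n\delta+O(\delta^2)$, with $c_n$ given by~\eqref{eq:cn-rec}. This closes the induction. Note that $\Delta_{0,p}$ never enters: the minima in~\eqref{eq:delta-rec} range over $m\ge j\ge1$, so only $m\ge1$ appears, and the $p^n$-term has already been absorbed into the constant term.

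\emph{Main obstacle.} The only delicate point is the non-smooth $\min$ operator. A naive Taylor expansion would require differentiating $\min$, which fails at ties among the $c_m$, and such ties can occur. The Lipschitz and homogeneity argument in (c) avoids this entirely, so I would state it as a small auxiliary lemma: if $f_m(\delta)=c_m\delta+O(\delta^2)$ for finitely many $m$, then $\min_m f_m(\delta)=\delta\min_m c_m+O(\delta^2)$ as $\delta\to0^+$. Everything else is bookkeeping of $O$-constants over finite index sets.
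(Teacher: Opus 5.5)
Your proof is correct and takes the same route as the paper. Both substitute $p=\tfrac12-\delta$ into the deficit recursion~\eqref{eq:delta-rec} and match first-order terms: the constant term $n\delta/2^{n-1}$, the weights $\binom{n}{j}2^{-n}+O(\delta)$, and the minima $\delta\min c_m+O(\delta^2)$. You add only what the paper leaves implicit, namely the strong induction on $n$ and a justification of the minimum step via the $1$-Lipschitz property of $\min$ and its positive homogeneity for $\delta>0$.
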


\begin{proof}
  Match first-order coefficients in~\eqref{eq:delta-rec}.  The
  constant term contributes $n/2^{n-1}$.  Each binomial weight is
  $\binom{n}{j}2^{-n}+O(\delta)$; the $\min$ of the
  $\Delta_m=c_m\delta+O(\delta^2)$ at first order is
  $(\min c_m)\delta+O(\delta^2)$.  Products of $O(\delta)$ terms are
  $O(\delta^2)$.
\end{proof}

\begin{example}\label{ex:cn-small}
  The first few values of~$c_n$:
  \begin{gather*}
    c_1=1,\quad c_2=\tfrac{3}{2},\quad
    c_3=\tfrac{27}{16}=1.6875,\quad
    c_4=\tfrac{111}{64}\approx 1.7343, \\
    c_5=\tfrac{3555}{2048}\approx 1.7358,\quad
    c_6=\tfrac{113337}{65536}\approx 1.7293.
  \end{gather*}
  The sequence increases on $\{1,\dots,5\}$ and decreases on $\{5,6,7,...\}$ (as we will prove below).
  In particular $c_5>c_4 > c_6$.
  Each of the six identities above is formally verified in
  Lean~4 (\href{https://github.com/pfaffelh/coins/blob/c60bcd3/CoinsLean/CoinsLean/Perturbation.lean\#L76}{\texttt{c\_one}},
  \dots,
  \href{https://github.com/pfaffelh/coins/blob/c60bcd3/CoinsLean/CoinsLean/Perturbation.lean\#L244}{\texttt{c\_six}});
  see Appendix~\ref{app:lean}, Table~\ref{tab:lean-map}, for direct
  links to the source.
  Figure~\ref{fig:cn-sequence} plots the sequence up to $n=25$,
  together with the limit $L=\lim_{n\to\infty}c_n\approx
  1.7035$ and the uniform lower bound $\tfrac{27}{16}=1.6875$
  established in Lemma~\ref{lem:lower}.

  \begin{figure}[h]
    \centering
    \includegraphics[width=0.78\linewidth]{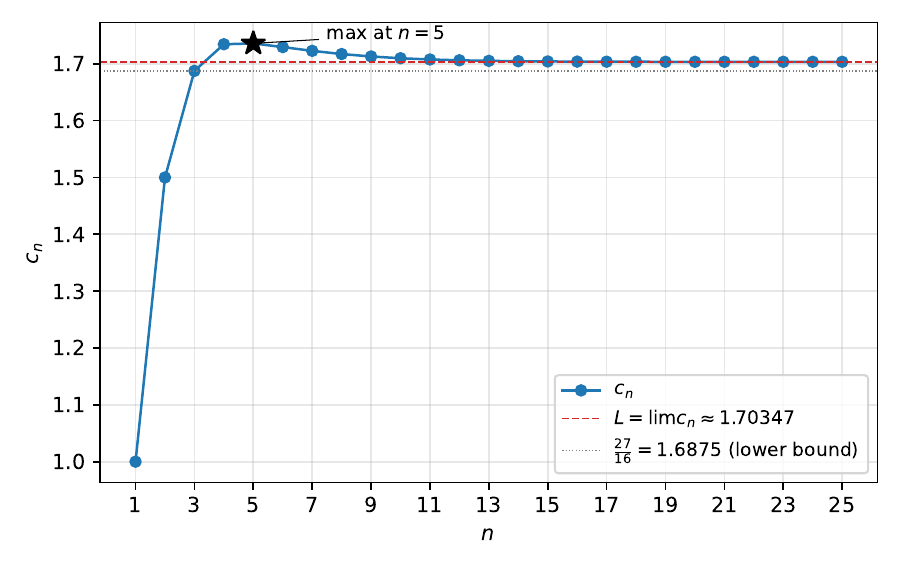}
    \caption{The sequence $n\mapsto c_n$ for $1\le n\le 25$.
      Computed via the recursion of
      Proposition~\ref{prop:cn} in $80$-digit rational
      arithmetic.  The sequence increases on
      $\{1,\dots,5\}$ (with maximum at $n=5$, marked by a star),
      strictly decreases for $n\ge 5$, and converges from above
      to $L\approx 1.7035$.  The lower bound $\tfrac{27}{16}$
      from Lemma~\ref{lem:lower} is shown as a dotted line.}
    \label{fig:cn-sequence}
  \end{figure}
\end{example}

\subsection{Collapse, lower bound, and linear recursion}\label{sec:joint}

The four results below — the asymptotic collapse of the $\min$
in~\eqref{eq:cn-rec}~(Lemma~\ref{lem:collapse}), the uniform lower
bound~$c_n\ge\tfrac{27}{16}$ for $n\geq 4$~(Lemma~\ref{lem:lower}), the strict
decrease of $(c_n)_{n\ge 5}$~(Lemma~\ref{lem:decreasing}), and the
linear recursion for $n\ge 7$~(Proposition~\ref{prop:linear}) — are
mutually entangled: each, taken alone, depends on parts of the
others.  Stating them in isolation produces an apparent circular
dependency.  We therefore state all four results first and then
prove them simultaneously by induction on~$n$, with the IH at level
$<n$ supplying exactly what is needed at level~$n$.

\begin{lemma}[Asymptotic collapse]\label{lem:collapse}
  For all $n\ge 7$ and $j\in\{1,\dots,n-1\}$:
  \begin{equation}\label{eq:collapse}
    \min_{j\le m\le n-1}c_m
    \;=\;
    \begin{cases}
      c_j     & \text{if } j\in\{1,2,3\},\\
      c_{n-1} & \text{if } j\in\{4,\dots,n-1\}.
    \end{cases}
  \end{equation}
\end{lemma}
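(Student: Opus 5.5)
Lemma~\ref{lem:collapse} will be proved as one strand of the joint induction on $n$ announced above, together with Lemmas~\ref{lem:lower} and~\ref{lem:decreasing} and Proposition~\ref{prop:linear}.  At level $n\ge 7$ the minimum in~\eqref{eq:collapse} only involves $c_1,\dots,c_{n-1}$.  Hence the induction hypothesis supplies everything needed about these values:
\begin{itemize}
\item the explicit values $c_1,\dots,c_6$ from Example~\ref{ex:cn-small};
\item the lower bound $c_m\ge\tfrac{27}{16}$ for $4\le m\le n-1$ (Lemma~\ref{lem:lower} at levels $<n$);
\item the strict decrease $c_5>c_6>\dots>c_{n-1}$ (Lemma~\ref{lem:decreasing} at levels $<n$).
\end{itemize}
The claim is then a purely order-theoretic consequence of these facts, with no new analysis at level $n$.

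\emph{Case $j\ge 4$.}  The index set $\{j,\dots,n-1\}$ lies in $\{4,\dots,n-1\}$.  If $j\ge5$, strict decrease on $\{5,\dots,n-1\}$ gives the minimum $c_{n-1}$.  If $j=4$, we also need $c_4\ge c_{n-1}$.  Since $n-1\ge 6$, decrease gives $c_{n-1}\le c_6=\tfrac{113337}{65536}<\tfrac{111}{64}=c_4$, which can be checked from the explicit values.

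\emph{Case $j\in\{1,2,3\}$.}  From the explicit values, $c_1=1<c_2=\tfrac32<c_3=\tfrac{27}{16}$.  For $m\ge4$ the lower bound gives $c_m\ge\tfrac{27}{16}=c_3$, so every $c_m$ with $m\ge j$ is at least $c_j$.  Hence the minimum over $\{j,\dots,n-1\}$ is $c_j$.  A tie at $j=3$, should one occur, does not affect the value of the minimum.

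The substance therefore sits in the other strands, chiefly Lemma~\ref{lem:lower}.  The bound $c_m\ge\tfrac{27}{16}$ at level $m$ is itself obtained from the recursion~\eqref{eq:cn-rec} with the collapsed minimum.  For this to work, the collapse at level $m$ must follow only from information at levels $<m$, which is exactly the ordering arranged above.

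The main obstacle I expect is the base of the induction rather than the step.  For $n=7$ the relevant facts about $c_1,\dots,c_6$ are checked directly from Example~\ref{ex:cn-small}: monotonicity on $\{1,\dots,5\}$, $c_5>c_6$, and $c_4,c_5,c_6\ge\tfrac{27}{16}$.  The thresholds in the other lemmas must be aligned so that at every level $n\ge7$ the hypotheses used above are available for all indices up to $n-1$.
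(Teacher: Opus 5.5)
Your proposal is correct and is essentially the paper's own argument. Inside the joint induction, the case $j\in\{1,2,3\}$ follows from $c_1<c_2<c_3=\tfrac{27}{16}$ together with the inductive lower bound $c_m\ge\tfrac{27}{16}$ for $m\ge4$, and the case $j\ge4$ follows from the inductive chain $c_5>c_6>\dots>c_{n-1}$ together with $c_4=\tfrac{111}{64}>c_6\ge c_{n-1}$, exactly as you argue.
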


\begin{lemma}\label{lem:lower}
  $c_n\ge\tfrac{27}{16}$ for every $n\ge 4$.
\end{lemma}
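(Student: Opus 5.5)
We prove Lemma~\ref{lem:lower} inside the joint induction announced above; we only check what level $n$ needs from the induction hypothesis at levels $<n$. The cases $n=4,5,6$ follow from the exact values of Example~\ref{ex:cn-small}: $c_4=\tfrac{111}{64}$, $c_5=\tfrac{3555}{2048}$ and $c_6=\tfrac{113337}{65536}$ all exceed $\tfrac{27}{16}$. So fix $n\ge7$. We assume Lemma~\ref{lem:lower} for all $4\le m\le n-1$, and we assume Lemma~\ref{lem:collapse} at level $n$, which is supplied by the joint induction.

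With the collapse, the recursion~\eqref{eq:cn-rec} becomes linear:
\[
  c_n=\frac{n}{2^{n-1}}+\frac{1}{2^n}\Bigl[\tbinom n1 c_1+\tbinom n2 c_2+\tbinom n3 c_3\Bigr]
  +\frac{1}{2^n}\Bigl(2^n-2-n-\tbinom n2-\tbinom n3\Bigr)c_{n-1}.
\]
By the induction hypothesis $c_{n-1}\ge\tfrac{27}{16}$. Its coefficient is nonnegative, so we may replace $c_{n-1}$ by $\tfrac{27}{16}$. This leaves us to show
\[
  \frac{n}{2^{n-1}}+\frac{1}{2^n}\Bigl[n+\tfrac32\tbinom n2+\tfrac{27}{16}\tbinom n3\Bigr]
  \ \ge\ \frac{27}{16}\cdot\frac{2+n+\binom n2+\binom n3}{2^n}.
\]
Multiply by $2^n$ and cancel the $\binom n3$ terms, which carry the same coefficient $\tfrac{27}{16}$ on both sides. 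The inequality reduces to
\[
  2n+n+\tfrac32\tbinom n2\ \ge\ \tfrac{27}{16}\Bigl(2+n+\tbinom n2\Bigr).
\]
Collecting terms, this is equivalent to
\[
  \Bigl(3-\tfrac{27}{16}\Bigr)n+\Bigl(\tfrac32-\tfrac{27}{16}\Bigr)\tbinom n2\ \ge\ \tfrac{27}{8},
  \qquad\text{i.e.}\qquad
  \tfrac{21}{16}\,n-\tfrac{3}{16}\tbinom n2\ \ge\ \tfrac{27}{8}.
\]
Multiplying by $32$ gives $42n-3n(n-1)\ge108$. This quadratic fails for large $n$. At $n=7$ it gives $294-126=168\ge108$, which holds, but it fails once $3n^2-45n+108>0$, that is, for $n\ge 12$.

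So the direct comparison with $\tfrac{27}{16}$ is too crude for large $n$, and this is the main obstacle. The fix is to keep the exact sizes of $c_{n-1}$ and of $c_1,c_2,c_3$ rather than rounding everything to $\tfrac{27}{16}$. Rewrite the linear recursion as a relation for $c_n-\tfrac{27}{16}$:
\[
  c_n-\tfrac{27}{16}=\Bigl(1-\tfrac{B_n}{2^n}\Bigr)\Bigl(c_{n-1}-\tfrac{27}{16}\Bigr)+\frac{R_n}{2^n},
  \qquad B_n=2+n+\tbinom n2+\tbinom n3,
\]
where $R_n=3n-\tfrac{27}{8}-\tfrac{3}{16}\binom n2$ is the quantity computed above. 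The first term is nonnegative by the induction hypothesis, but it is not small. For large $n$ we therefore use a stronger hypothesis, namely the strict decrease of Lemma~\ref{lem:decreasing} combined with an explicit margin. The target is $c_{n-1}-\tfrac{27}{16}\ge \varepsilon$ with $\varepsilon\approx L-\tfrac{27}{16}\approx0.016$. Given such a margin, it suffices that
\[
  \Bigl(1-\tfrac{B_n}{2^n}\Bigr)\varepsilon\ \ge\ \frac{|R_n|}{2^n}.
\]
The right side is $O(n^2 2^{-n})$, so this is immediate for large $n$. The moderate range, say $7\le n\le 20$, we check with the exact rational values of $c_n$.

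Concretely, we strengthen the induction to a margin bound $c_n\ge\tfrac{27}{16}+\varepsilon$ for $n\ge N_0$ and verify the finitely many levels below $N_0$ numerically. This bookkeeping is exactly the reason for the joint induction. The step we expect to be delicate is choosing $\varepsilon$ and $N_0$ so that the margin propagates; this is the cubic-type inequality mentioned in the introduction.
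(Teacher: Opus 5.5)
Your treatment of $7\le n\le 12$ matches the paper. After the collapse, the crude comparison with $\tfrac{27}{16}$ is exactly the sign of $A_n-\tfrac{27}{16}B_n=-3(n^2-15n+36)/(32\cdot 2^n)$, in the paper's normalization of $B_n$. This is $\ge 0$ up to and including $n=12$, with equality at $n=12$. So the comparison fails only from $n=13$ on, not from $n=12$. Also, your $R_n$ should read $\tfrac{21}{16}n-\tfrac{3}{16}\binom n2-\tfrac{27}{8}=-\tfrac{3}{32}(n^2-15n+36)$; you dropped the $-\tfrac{27}{16}n$ term.

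The genuine gap is the tail $n\ge 13$: a fixed margin does not propagate. Put $\beta_n:=\bigl(2+n+\binom n2+\binom n3\bigr)/2^n\in(0,1)$ and $\rho_n:=R_n/2^n$, which is $<0$ for $n\ge 13$. The hypothesis $c_{n-1}-\tfrac{27}{16}\ge\varepsilon$ then yields only $c_n-\tfrac{27}{16}\ge(1-\beta_n)\varepsilon+\rho_n=\varepsilon-\beta_n\varepsilon+\rho_n<\varepsilon$. So you get $c_n\ge\tfrac{27}{16}$, but not the margin $\varepsilon$ you need at level $n+1$. No choice of $\varepsilon>0$ and $N_0$ repairs this, because each step loses $\beta_n\varepsilon+|\rho_n|>0$.

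Invoking Lemma~\ref{lem:decreasing} does not help either. Strict decrease bounds $c_n$ from above, not below. Its proof for $n\ge 13$ itself uses $c_{n-1}\ge\tfrac{27}{16}$. And using $c_{n-1}\ge L$ with the numerical value of $L$ would be circular.

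The missing idea is a cumulative estimate in which the margin may decay, provided the total loss is summable. Set $\varepsilon_n:=c_n-\tfrac{27}{16}$ and iterate $\varepsilon_n=(1-\beta_n)\varepsilon_{n-1}+\rho_n$ from level $12$. This gives $\varepsilon_n\ge\prod_{k=13}^{n}(1-\beta_k)\,\varepsilon_{12}-\sum_{k=13}^{n}|\rho_k|$. Three explicit inputs then finish it:
\begin{enumerate}[(i)]
\item $\varepsilon_{12}>\tfrac1{60}$, from the exact rational value of $c_{12}$;
\item $\prod_{k\ge13}(1-\beta_k)\ge 1-\sum_{k\ge13}\beta_k>\tfrac78$;
\item $\sum_{k\ge13}|\rho_k|\le\tfrac{3}{32}\sum_{k\ge13}k^2/2^k=\tfrac{297}{65536}<\tfrac1{200}$.
\end{enumerate}
Hence $\varepsilon_n>\tfrac{7}{480}-\tfrac{1}{200}=\tfrac{23}{2400}>0$ for all $n\ge 13$, which is the paper's argument. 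In inductive form, your strengthened hypothesis must be this decaying lower bound, not a constant margin.
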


\begin{lemma}\label{lem:decreasing}
  The sequence $(c_n)_{n\ge 5}$ is strictly decreasing.
\end{lemma}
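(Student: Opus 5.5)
Within the joint induction of \S\ref{sec:joint}, Lemma~\ref{lem:decreasing} is the statement $c_n<c_{n-1}$ for $n\ge 6$.

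\textbf{Base cases.} I will verify $n=6,7,8$ by exact rational arithmetic. The values $c_5=\tfrac{3555}{2048}>c_6=\tfrac{113337}{65536}$ are given in Example~\ref{ex:cn-small}. The values $c_7$ and $c_8$ are computed from \eqref{eq:cn-rec} by evaluating the suffix minima directly.

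\textbf{Reduction to a threshold.} For the inductive step at $n\ge 7$, the collapse at level $n$ (Lemma~\ref{lem:collapse}) is supplied by the induction hypothesis. Using $\sum_{j=1}^{n-1}\binom nj=2^n-2$, it turns \eqref{eq:cn-rec} into
\[
  c_n = A_n+(1-B_n)\,c_{n-1},
\]
where
\[
  A_n=\frac{2n+n c_1+\binom n2 c_2+\binom n3 c_3}{2^n},\qquad
  B_n=\frac{2+n+\binom n2+\binom n3}{2^n}.
\]
Here $c_1=1$, $c_2=\tfrac32$ and $c_3=\tfrac{27}{16}$. Hence
\[
  c_n-c_{n-1}=A_n-B_n c_{n-1}=B_n\,(r_n-c_{n-1}),\qquad r_n:=A_n/B_n,
\]
so strict decrease at level $n$ is equivalent to $c_{n-1}>r_n$.

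I note first that the naive route is not enough. Bounding $c_{n-1}\ge\tfrac{27}{16}$ via Lemma~\ref{lem:lower} and discarding the $\binom n3$ term leaves a positive quantity at $n=7$. So a sharper comparison is needed.

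\textbf{Propagating the threshold.} I will carry the auxiliary statement $c_{n-1}>r_n$ along the induction. For $n\ge 8$, the linear form at level $n-1$ shows that $c_{n-1}$ is a convex combination of $r_{n-1}$ and $c_{n-2}$, since $0<B_{n-1}<1$:
\[
  c_{n-1}=B_{n-1}r_{n-1}+(1-B_{n-1})c_{n-2}.
\]
By the induction hypothesis, $c_{n-2}>r_{n-1}$, and therefore $c_{n-1}>r_{n-1}$. It then suffices to prove that $r_n$ is nonincreasing: $r_n\le r_{n-1}$ for all $n\ge 8$. Combined with the base case at $n=7$, this closes the induction.

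\textbf{Main obstacle: the polynomial inequality.} The step I expect to be hardest is the monotonicity $r_n\le r_{n-1}$. Both numerator and denominator of $r_n$ are explicit cubics in $n$, so after cross-multiplication the inequality is a polynomial inequality of degree six in $n$. I would first factor it and then check its sign by a leading-term argument for large $n$, verifying the finitely many small $n$ directly. As $n\to\infty$, $r_n\downarrow\tfrac{27}{16}$, since the $\binom n3$ terms dominate with ratio $c_3$. This is consistent with the lower bound in Lemma~\ref{lem:lower}. If monotonicity of $r_n$ fails for a few small $n$, I would handle those indices as additional explicit base cases, computing $c_n$ exactly.
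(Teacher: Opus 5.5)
\textbf{Gap: the claimed monotonicity of $r_n$ is false.} Your reduction to $c_{n-1}>r_n$ with $r_n:=A_n/B_n$ is correct, and so is the convex-combination step $c_{n-1}>r_{n-1}$. The induction, however, rests on the claim that $r_n$ is nonincreasing for all $n\ge 8$ with $r_n\downarrow\tfrac{27}{16}$, and this claim is false. Dividing \eqref{eq:alg-id} by $B_n$ gives
\[
  r_n=\frac{27}{16}-\frac{3(n-3)(n-12)}{32\,D_n},\qquad D_n:=2^nB_n=\frac{n^3+5n+12}{6}.
\]
So for $n\ge 13$ you have $r_n<\tfrac{27}{16}$ while $r_n\to\tfrac{27}{16}$. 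The sequence therefore eventually \emph{increases} to its limit, roughly $r_n\approx\tfrac{27}{16}-\tfrac{9}{16n}$. Concretely, $(n-3)(n-12)/D_n$ peaks at $n=26$, so $r_n>r_{n-1}$ for every $n\ge 27$.

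Your leading-term check of the degree-six polynomial would thus give the wrong sign. The failures occur at all large $n$, not at finitely many small $n$, so they cannot be absorbed into extra base cases. From $n=27$ on, $c_{n-1}>r_{n-1}$ no longer implies $c_{n-1}>r_n$, and the induction breaks there.

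\textbf{The fix.} The ``naive route'' you dismissed fails only for small $n$. For $n\ge 13$ the display gives $r_n<\tfrac{27}{16}\le c_{n-1}$ by Lemma~\ref{lem:lower}, which is available as IH(b) in the joint induction. This is exactly the paper's argument for $n\ge 13$.

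With this patch your approach becomes a valid proof, and slightly more economical than the paper's treatment of the small cases. On $\{7,\dots,12\}$ the factor $(n-3)(12-n)\ge 0$ is nonincreasing while $D_n$ increases, so $r_7>r_8>\dots>r_{12}=\tfrac{27}{16}$. Your propagation then covers $n\in\{8,\dots,12\}$ from the single base comparison $c_6=\tfrac{113337}{65536}>r_7=\tfrac{357}{208}$. This replaces the paper's explicit rational computation of $c_7,\dots,c_{12}$.
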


\begin{proposition}[Linear recursion]\label{prop:linear}
  For $n\ge 7$:
  \begin{equation}\label{eq:cn-linear}
    c_n \;=\; A_n + (1-B_n)\,c_{n-1},
  \end{equation}
  where
  \begin{equation}\label{eq:AB}
    A_n := \frac{n}{2^{n-1}}
    + \frac{n\,c_1+\binom{n}{2}c_2+\binom{n}{3}c_3}{2^n},
    \qquad
    B_n := \frac{2+n+\binom{n}{2}+\binom{n}{3}}{2^n}.
  \end{equation}
  Moreover the algebraic identity
  \begin{equation}\label{eq:alg-id}
    A_n - \tfrac{27}{16}B_n
    \;=\; -\frac{3(n^2-15n+36)}{32\cdot 2^n}
  \end{equation}
  holds for every~$n\ge 1$ (independently of the linear recursion).
\end{proposition}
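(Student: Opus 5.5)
The linear recursion~\eqref{eq:cn-linear} is a direct substitution of the collapse~\eqref{eq:collapse} into the recursion~\eqref{eq:cn-rec} of Proposition~\ref{prop:cn}. The identity~\eqref{eq:alg-id} is a polynomial computation using the explicit values $c_1=1$, $c_2=\tfrac32$, $c_3=\tfrac{27}{16}$ from Example~\ref{ex:cn-small}. So the proposition itself is short once Lemma~\ref{lem:collapse} is available at level~$n$.

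\emph{Linear recursion.} Fix $n\ge7$ and assume Lemma~\ref{lem:collapse} holds at this level. In~\eqref{eq:cn-rec} I split the sum at $j\le3$ versus $4\le j\le n-1$:
\[
\sum_{j=1}^{n-1}\binom nj\min_{j\le m\le n-1}c_m
= n c_1+\binom n2 c_2+\binom n3 c_3+\Bigl(\sum_{j=4}^{n-1}\binom nj\Bigr)c_{n-1}.
\]
Then I use
\[
\sum_{j=4}^{n-1}\binom nj=2^n-\Bigl(2+n+\binom n2+\binom n3\Bigr)=2^n(1-B_n),
\]
which accounts for the missing terms $j=0$ and $j=n$ (the ``$2$'') and $j=1,2,3$. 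This needs $n-1\ge4$, which holds. Dividing by $2^n$ and adding $n/2^{n-1}$ gives exactly $c_n=A_n+(1-B_n)c_{n-1}$.

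\emph{Algebraic identity.} I insert $c_1,c_2,c_3$ into $A_n-\tfrac{27}{16}B_n$ and multiply by $2^n$. The $\binom n3$ terms cancel, because the coefficient of $c_3$ equals $\tfrac{27}{16}$. What remains is
\[
2n+n-\tfrac{27}{16}n+\Bigl(\tfrac32-\tfrac{27}{16}\Bigr)\binom n2-\tfrac{27}{8}
=\tfrac{21}{16}n-\tfrac{3}{32}n(n-1)-\tfrac{27}{8}.
\]
Multiplying by $32$ gives $-3n^2+45n-108=-3(n^2-15n+36)$, which is~\eqref{eq:alg-id}. This uses no recursion and so holds for every $n\ge1$.

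\emph{Main obstacle.} The substance lies in having Lemma~\ref{lem:collapse} at level~$n$, which is why the proof is embedded in the joint induction of \S\ref{sec:joint}. At level $n$, the IH (Lemmas~\ref{lem:lower} and~\ref{lem:decreasing} for indices $<n$) gives two facts:
\begin{enumerate}[(a)]
\item $(c_m)_{5\le m\le n-1}$ is decreasing, and $c_4>c_6\ge c_{n-1}$ when $n\ge7$ (using the values in Example~\ref{ex:cn-small}). Hence for $j\ge4$ the suffix minimum is $c_{n-1}$.
\item $c_{n-1}\ge\tfrac{27}{16}=c_3>c_2>c_1$. Hence for $j\le3$ the suffix minimum is $c_j$.
\end{enumerate}

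The delicate point, which I expect to be the real work in the joint induction rather than here, is closing the loop. One must show $\tfrac{27}{16}\le c_n<c_{n-1}$. For this I would use~\eqref{eq:alg-id}:
\[
c_n-\tfrac{27}{16}=\Bigl(A_n-\tfrac{27}{16}B_n\Bigr)+(1-B_n)\Bigl(c_{n-1}-\tfrac{27}{16}\Bigr).
\]
The first term is negative only for $n\in(3,12)$, i.e.\ $4\le n\le11$, so those small cases need a separate numerical or exact check.
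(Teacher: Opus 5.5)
Your proof of the proposition proper (substituting the collapse into \eqref{eq:cn-rec}, splitting the sum at $j=3$, using $\sum_{j=4}^{n-1}\binom nj=2^n(1-B_n)$, and checking \eqref{eq:alg-id} as a polynomial identity) is correct and is exactly the paper's argument. The gap is in your plan for closing the joint induction, on which the recursion for every $n\ge7$ depends. You have the sign of \eqref{eq:alg-id} reversed. In fact $A_n-\tfrac{27}{16}B_n=-3(n-3)(n-12)/(32\cdot2^n)$ is \emph{positive} for $4\le n\le11$, zero at $n=12$, and \emph{negative} for every $n\ge13$. So in your decomposition of $c_n-\tfrac{27}{16}$ the cases $7\le n\le12$ are the easy ones: the lower bound follows at once from the inductive hypothesis. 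For each of the infinitely many $n\ge13$, however, the one-step estimate loses $\delta_n:=3(n^2-15n+36)/(32\cdot2^n)>0$, and so cannot propagate $c_{n-1}\ge\tfrac{27}{16}$ to $c_n\ge\tfrac{27}{16}$. No finite check repairs this. Without Lemma~\ref{lem:lower} at all levels, neither the collapse for $j\le3$ nor the linear recursion is established for large $n$.

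The missing idea is a buffer carried through the tail. Put $\varepsilon_n:=c_n-\tfrac{27}{16}$. The recursion gives $\varepsilon_n=(1-B_n)\varepsilon_{n-1}-\delta_n$. Iterating from $n=12$ and using $0<1-B_k\le1$ yields $\varepsilon_n\ge\prod_{k=13}^n(1-B_k)\,\varepsilon_{12}-\sum_{k=13}^n\delta_k$. The paper then uses $\varepsilon_{12}>\tfrac1{60}$ (exact computation), $\prod_{k\ge13}(1-B_k)>\tfrac78$ and $\sum_{k\ge13}\delta_k<\tfrac1{200}$ to get $\varepsilon_n>\tfrac{23}{2400}$. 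The finite checks belong instead to the strict decrease. There $c_n-c_{n-1}=A_n-B_nc_{n-1}\le A_n-\tfrac{27}{16}B_n$ is negative only for $n\ge13$, so $c_n<c_{n-1}$ for $7\le n\le12$ has to be verified by direct computation.
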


\begin{proof}[Joint proof of Lemmas~\ref{lem:collapse},
  \ref{lem:lower}, \ref{lem:decreasing} and Proposition~\ref{prop:linear}]
  The identity~\eqref{eq:alg-id} is a direct algebraic computation
  from~\eqref{eq:AB} using $\binom{n}{2}=\tfrac{n(n-1)}{2}$ and
  $\binom{n}{3}=\tfrac{n(n-1)(n-2)}{6}$; we use it freely below.
  Formally, it is verified in Lean as the lemma
  \href{https://github.com/pfaffelh/coins/blob/c60bcd3/CoinsLean/CoinsLean/Perturbation.lean\#L932}{\texttt{alg\_id}};
  see Appendix~\ref{app:lean}, Table~\ref{tab:lean-map}.

  Let $P(n)$ denote the conjunction of the following
  four claims (each understood vacuously when its index range is
  empty):
  \begin{enumerate}[\rm(a)]
  \item Lemma~\ref{lem:collapse} holds at level~$n$ (when $n\ge 7$);
  \item $c_n\ge\tfrac{27}{16}$ (when $n\ge 4$);
  \item $c_n<c_{n-1}$ (when $n\ge 6$);
  \item the linear recursion~\eqref{eq:cn-linear} holds at level~$n$
    (when $n\ge 7$).
  \end{enumerate}
  We prove $P(n)$ for all $n\ge 4$ by strong induction on~$n$.

  \medskip
  \emph{Base cases $n\in\{4,5,6\}$.}  By Example~\ref{ex:cn-small},
  $c_4=\tfrac{111}{64}$, $c_5=\tfrac{3555}{2048}$,
  $c_6=\tfrac{113337}{65536}$, all of which exceed
  $\tfrac{27}{16}=\tfrac{108}{64}$, so~(b) holds.  Also $c_5>c_6$,
  so~(c) holds at $n=6$.  Claims~(a) and~(d) are vacuous for
  $n\le 6$.

  \medskip
  \emph{Inductive step.}  Fix $n\ge 7$ and assume $P(m)$ for all
  $4\le m<n$.  In particular: by IH(b),
  $c_m\ge\tfrac{27}{16}$ for $4\le m\le n-1$; by IH(c),
  $c_5>c_6>\dots>c_{n-1}$.  We now establish~(a)--(d) at level~$n$.

  \emph{(a) Collapse.}  For $j\in\{1,2,3\}$: from
  $c_1=1<c_2=\tfrac32<c_3=\tfrac{27}{16}$ (Example~\ref{ex:cn-small})
  and $c_m\ge\tfrac{27}{16}=c_3$ for $m\ge 4$ (IH(b)), the minimum
  over $\{j,\dots,n-1\}$ is~$c_j$.

  For $j\in\{4,\dots,n-1\}$: iterating IH(c) gives the chain
  $c_5>c_6>\dots>c_{n-1}$, so in particular
  $c_m\ge c_{n-1}$ for every $m\in\{5,\dots,n-1\}$.  For $m=4$,
  $c_4=\tfrac{111}{64}>c_6$ (Example~\ref{ex:cn-small}); using the
  segment $c_6\ge c_{n-1}$ of the same chain, we obtain $c_4\ge
  c_{n-1}$ as well.  Hence the minimum over $\{j,\dots,n-1\}$ for
  $j\ge 4$ is~$c_{n-1}$.

  \emph{(d) Linear recursion.}  Substituting the collapse~(a)
  just proved into~\eqref{eq:cn-rec} and splitting the sum at $j=3$
  yields~\eqref{eq:cn-linear}--\eqref{eq:AB}.  Formally, this
  substitution is verified in Lean as the theorem
  \href{https://github.com/pfaffelh/coins/blob/c60bcd3/CoinsLean/CoinsLean/Perturbation.lean\#L1495}{\texttt{c\_linear\_rec}};
  see Appendix~\ref{app:lean}, Table~\ref{tab:lean-map}.

  \emph{(b) Lower bound.}  From~(d) and~\eqref{eq:alg-id},
  \[
    c_n - \tfrac{27}{16}
    \;=\; \bigl(A_n-\tfrac{27}{16}B_n\bigr)
        + (1-B_n)\bigl(c_{n-1}-\tfrac{27}{16}\bigr).
  \]
  By IH(b), $c_{n-1}-\tfrac{27}{16}\ge 0$.  Note also
  $0<B_n<1$ for all $n\ge 7$: the numerator
  $2+n+\binom{n}{2}+\binom{n}{3}$ in~\eqref{eq:AB} is strictly
  less than $2^n=\sum_{j=0}^n\binom{n}{j}$, the difference being
  $\sum_{j=4}^{n-1}\binom{n}{j}>0$.  For $n\in\{7,\dots,12\}$, the
  quadratic $n^2-15n+36$ is non-positive on $\{3,\dots,12\}$
  (its roots are $3$ and $12$), so $A_n-\tfrac{27}{16}B_n\ge 0$
  by~\eqref{eq:alg-id}, and~(b) follows.  For $n\ge 13$ we argue
  cumulatively: define $\varepsilon_n:=c_n-\tfrac{27}{16}$ and
  $\delta_n:=3(n^2-15n+36)/(32\cdot 2^n)$,
  so~(d) gives $\varepsilon_n=(1-B_n)\varepsilon_{n-1}-\delta_n$
  for every $n\ge 13$.  Iterating this identity from level~$12$
  upwards and using the trivial bound $0<1-B_k\le 1$ yields, for
  every $n\ge 13$,
  \begin{equation}\label{eq:eps-iter}
    \varepsilon_n
    \;=\; \prod_{k=13}^{n}(1-B_k)\,\varepsilon_{12}
    \;-\; \sum_{k=13}^{n}\!\Bigl(\prod_{j=k+1}^{n}(1-B_j)\Bigr)\delta_k
    \;\ge\; \Bigl(\prod_{k=13}^{n}(1-B_k)\Bigr)\varepsilon_{12}
            \;-\; \sum_{k=13}^{n}\delta_k.
  \end{equation}
  The starting value $\varepsilon_{12}>\tfrac{1}{60}$ is verified by
  exact rational computation (formally:
  \href{https://github.com/pfaffelh/coins/blob/c60bcd3/CoinsLean/CoinsLean/Perturbation.lean\#L2995}{\texttt{c\_twelve\_buffer}});
  the cumulative negative contribution is bounded by
  $\sum_{n\ge 13}\delta_n \le\tfrac{297}{65536}<\tfrac{1}{200}$
  (using $\sum_{n\ge 13}n^2/2^n=\tfrac{99}{2048}$; formally:
  \href{https://github.com/pfaffelh/coins/blob/c60bcd3/CoinsLean/CoinsLean/Perturbation.lean\#L889}{\texttt{delta\_tail\_bound}});
  meanwhile $\prod_{n\ge 13}(1-B_n)>\tfrac{7}{8}$ (using
  $\sum_{n\ge 13}B_n<\tfrac{1}{8}$; formally:
  \href{https://github.com/pfaffelh/coins/blob/c60bcd3/CoinsLean/CoinsLean/Perturbation.lean\#L864}{\texttt{B\_tail\_bound}}).
  Inserting these three inequalities into~\eqref{eq:eps-iter} gives
  $\varepsilon_n>\tfrac{1}{60}\cdot\tfrac{7}{8}-\tfrac{1}{200}
  =\tfrac{23}{2400}>0$ for every $n\ge 13$.  The iterated inequality
  itself is formally verified (in the slightly weaker
  Bernoulli-style form
  $\prod(1-B_k)\ge 1-\sum B_k$) as
  \href{https://github.com/pfaffelh/coins/blob/c60bcd3/CoinsLean/CoinsLean/Perturbation.lean\#L949}{\texttt{cum\_eps\_bound}}.

  (\emph{Note for an inductive treatment:} the sequence
  $(\delta_n)_{n\ge 13}$ is not monotone — one computes
  $\delta_{14}/\delta_{13}=\tfrac{11}{10}$ — but its ratio
  decreases through~$1$ at~$n=14$ and tends to~$\tfrac12$.  An
  iterated proof of the bound on $\sum_{n\ge 13}\delta_n$ should
  therefore split at~$n=14$; the closed-form route above sidesteps
  this subtlety.)

  \emph{(c) Strict decrease.}  By~(d), $c_n-c_{n-1}=A_n-B_nc_{n-1}$.
  Using $c_{n-1}\ge\tfrac{27}{16}$ (IH(b)) and $B_n>0$, this is
  $\le A_n-\tfrac{27}{16}B_n<0$ for $n\ge 13$ by~\eqref{eq:alg-id}
  ($n^2-15n+36>0$ there).  For $n\in\{7,\dots,12\}$, $c_n<c_{n-1}$
  is verified by direct rational computation (chain via the linear
  recursion from $c_6$ up to $c_n$, then numerical comparison).
  This completes the inductive step.  The combined statement of~(c)
  for all $n\ge 5$ is exposed as
  \href{https://github.com/pfaffelh/coins/blob/c60bcd3/CoinsLean/CoinsLean/Perturbation.lean\#L1488}{\texttt{c\_strict\_anti\_from\_five}};
  the corresponding statement of~(b) for all $m\ge 4$ as
  \href{https://github.com/pfaffelh/coins/blob/c60bcd3/CoinsLean/CoinsLean/Perturbation.lean\#L1483}{\texttt{c\_ge\_27\_16\_full}}.
\end{proof}

\subsection{The limit}

Since $(c_n)_{n\ge 5}$ is strictly decreasing
(Lemma~\ref{lem:decreasing}) and bounded below by $\tfrac{27}{16}$
(Lemma~\ref{lem:lower}), the Weierstra\ss{} monotone-convergence
criterion gives at once:

\begin{corollary}[Existence of the limit]\label{cor:L-exists}
  The limit $L:=\lim_{n\to\infty}c_n$ exists and satisfies
  $\tfrac{27}{16}\le L\le c_5=\tfrac{3555}{2048}$.
\end{corollary}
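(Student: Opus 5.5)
The plan is to apply the monotone convergence theorem directly to the tail sequence $(c_n)_{n\ge 5}$, using the two results just established by the joint induction of \S\ref{sec:joint}.

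First, Lemma~\ref{lem:decreasing} says that $(c_n)_{n\ge5}$ is strictly decreasing, so in particular it is non-increasing. Second, Lemma~\ref{lem:lower} gives $c_n\ge\tfrac{27}{16}$ for every $n\ge4$, and hence for every $n\ge5$. A non-increasing real sequence that is bounded below converges to its infimum. This gives existence of $L=\lim_{n\to\infty}c_n=\inf_{n\ge5}c_n$; dropping the finitely many terms $c_1,\dots,c_4$ does not affect the limit.

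For the bounds, the lower bound $L\ge\tfrac{27}{16}$ follows because each $c_n$ with $n\ge5$ lies in the closed set $[\tfrac{27}{16},\infty)$, and limits preserve non-strict inequalities. For the upper bound, monotonicity gives $c_n\le c_5$ for all $n\ge5$, so $L\le c_5$. Finally, $c_5=\tfrac{3555}{2048}$ is the value recorded in Example~\ref{ex:cn-small}.

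There is no genuine obstacle here, since all the substance sits in the joint induction. The only point needing care is that the lower bound and the decrease hold on overlapping index ranges ($n\ge4$ and $n\ge5$ respectively), so the argument should be run on the common range $n\ge5$.
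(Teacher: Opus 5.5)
Your proposal is correct and is the paper's argument: the corollary follows at once from the Weierstra\ss{} monotone-convergence criterion applied to $(c_n)_{n\ge5}$, which is strictly decreasing by Lemma~\ref{lem:decreasing} and bounded below by $\tfrac{27}{16}$ by Lemma~\ref{lem:lower}. The bound $L\le c_5$ comes from monotonicity, and the value $c_5=\tfrac{3555}{2048}$ from Example~\ref{ex:cn-small}.
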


\noindent
Formally verified as
\href{https://github.com/pfaffelh/coins/blob/c60bcd3/CoinsLean/CoinsLean/Perturbation.lean\#L1517}{\texttt{c\_limit\_exists}}.
The actual mathematical content of this subsection is the
\emph{explicit} formula for~$L$:

\begin{theorem}[Explicit formula for the limit]\label{thm:limit}
  For any $n_0\ge 7$,
  \begin{equation}\label{eq:L-formula}
    L \;=\; c_{n_0-1}\prod_{m\ge n_0}(1-B_m)
    \;+\; \sum_{k\ge n_0}A_k\prod_{m>k}(1-B_m).
  \end{equation}
  Both the infinite product and the series converge at geometric
  rate since $A_n,B_n=O(n^3/2^n)$.  Numerically,
  \begin{equation}\label{eq:L-value}
    L \;=\; 1.70347176087173673645\ldots
  \end{equation}
\end{theorem}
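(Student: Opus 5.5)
The plan is to iterate the linear recursion of Proposition~\ref{prop:linear} from level $n_0$ up to $n$ and then pass to the limit $n\to\infty$ term by term, in the spirit of Remark~\ref{rem:above-vs-below}. Fix $n_0\ge 7$. A finite induction on $n\ge n_0-1$ using \eqref{eq:cn-linear} at each level gives
\begin{equation}\label{eq:cn-finite-iter}
  c_n \;=\; c_{n_0-1}\prod_{m=n_0}^{n}(1-B_m) \;+\; \sum_{k=n_0}^{n}A_k\prod_{m=k+1}^{n}(1-B_m).
\end{equation}
The inductive step is just multiplying the previous identity by $(1-B_{n+1})$ and adding $A_{n+1}$.

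Next I control the coefficients. The joint proof already showed $0<B_m<1$ for $m\ge 7$. Since $B_m=O(m^3/2^m)$, we have $\sum_m B_m<\infty$. Hence the partial products $\prod_{m=k+1}^{n}(1-B_m)$ are decreasing in $n$ and stay in $(0,1]$. They converge to a strictly positive limit $\Pi_k$, which I verify by bounding $\log(1-B_m)\ge -2B_m$ for $B_m\le\tfrac12$, or by the Bernoulli bound $\prod(1-B_m)\ge 1-\sum B_m$ on tails. Likewise $A_k>0$ (all its terms are positive, as $c_1,c_2,c_3>0$) and $A_k=O(k^3/2^k)$, so $\sum_k A_k<\infty$.

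Then I pass to the limit. The first term of \eqref{eq:cn-finite-iter} converges to $c_{n_0-1}\Pi_{n_0-1}$. For the sum, I write it as $\sum_{k\ge n_0}f_n(k)$ with $f_n(k)=A_k\prod_{m=k+1}^{n}(1-B_m)\mathbf 1_{k\le n}$. Each $f_n(k)\to A_k\Pi_k$, and $0\le f_n(k)\le A_k$, which is summable. So Tannery's theorem (dominated convergence for series) gives convergence to $\sum_{k\ge n_0}A_k\Pi_k$. Since the left side tends to $L$ by Corollary~\ref{cor:L-exists}, \eqref{eq:L-formula} follows.

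For the geometric rate, the tails satisfy $\sum_{k>N}A_k=O(N^3 2^{-N})$, and similarly $\prod_{m>N}(1-B_m)=1-O(N^32^{-N})$. This gives a truncation error of order $N^3 2^{-N}$.

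The numerical value \eqref{eq:L-value} I would obtain by exact rational evaluation of $c_6$ and truncation at large $N$ with this explicit tail bound. I expect the main obstacle to be making the tail estimates explicit enough to certify twenty digits, and, in a formal setting, the Tannery interchange. The algebra of the iteration is routine.
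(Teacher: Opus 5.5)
Your proposal is correct and follows the paper's proof essentially step for step. You iterate \eqref{eq:cn-linear} to obtain \eqref{eq:cn-finite-iter}, use $0<B_m<1$ and $\sum B_m<\infty$ to get strictly positive limits of the partial products, and apply Tannery's theorem with the summable dominator $A_k$; the paper likewise only asserts the numerical value, so your remarks on certifying the digits go slightly beyond it. One small precision: $B_7=\tfrac{65}{128}>\tfrac12$, so your bound $\log(1-B_m)\ge-2B_m$ applies only from $m=8$ on, and the single positive factor $1-B_7$ must be handled separately. This is harmless, and your Bernoulli-on-tails alternative already covers it.
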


\begin{proof}
  Fix $n_0\ge 7$.  Iterating the linear
  recursion~\eqref{eq:cn-linear} from $n_0$ up to~$n$ yields, for
  every $n\ge n_0$,
  \begin{equation}\label{eq:cn-finite-iter}
    c_n \;=\; c_{n_0-1}\prod_{m=n_0}^n(1-B_m)
        \;+\; \sum_{k=n_0}^n A_k\prod_{m=k+1}^n(1-B_m).
  \end{equation}
  By Corollary~\ref{cor:L-exists} we already know that the
  left-hand side converges to~$L$; the content of the theorem is
  that the right-hand side converges to the asserted explicit
  expression.  We pass to the limit $n\to\infty$ in each summand.

  \emph{Step 1 (infinite product).}  Since $B_m=O(m^3/2^m)$, the
  series $\sum_{m\ge n_0}B_m$ converges absolutely.  Because
  $0<B_m<1$ for all $m\ge 7$ (shown in the inductive proof of
  the previous subsection), the partial products
  $\prod_{m=n_0}^n(1-B_m)$ converge as $n\to\infty$ to a strictly
  positive limit $\prod_{m\ge n_0}(1-B_m)$; cf.\ Knopp
  \cite[§29]{Knopp1990}.

  \emph{Step 2 (series of products).}  For each fixed $k\ge n_0$
  the same argument gives
  $\prod_{m=k+1}^n(1-B_m)\to\prod_{m>k}(1-B_m)$ as $n\to\infty$.
  Moreover every factor lies in $(0,1)$, so each finite product
  lies in $(0,1]$, and
  \begin{equation}\label{eq:series-dominate}
    \Bigl|A_k\prod_{m=k+1}^n(1-B_m)\Bigr|\;\le\;|A_k|
    \qquad(n\ge k).
  \end{equation}
  Since $A_k=O(k^3/2^k)$, the series $\sum_{k\ge n_0}|A_k|$
  converges.  The bound~\eqref{eq:series-dominate} therefore supplies
  a summable dominant, so by Tannery's theorem — the dominated
  convergence theorem for series, see Knopp~\cite[§41]{Knopp1990} —
  limit and summation commute:
  \[
    \lim_{n\to\infty}\sum_{k=n_0}^n A_k\prod_{m=k+1}^n(1-B_m)
    \;=\; \sum_{k\ge n_0}A_k\prod_{m>k}(1-B_m).
  \]

  \emph{Conclusion.}  Taking $n\to\infty$ in~\eqref{eq:cn-finite-iter}
  and using Steps~1 and~2 gives~\eqref{eq:L-formula}.  Both the
  product and the series converge at geometric rate
  since $A_n,B_n=O(n^3/2^n)$.
\end{proof}

\subsection{Shape of the optimal-value sequence near
  \texorpdfstring{$p=\tfrac12$}{p=1/2}}

\begin{corollary}\label{cor:shape}
  For $\delta>0$ sufficiently small and $p=\tfrac12-\delta$:
  \begin{enumerate}[\rm(i)]
  \item $w_{n-1,p}-w_{n,p}=(c_n-c_{n-1})\delta+O(\delta^2)$.
  \item The sequence $n\mapsto w_{n,p}$ has, to first order
    in~$\delta$, a strict local minimum at $n=5$ (since $c_5>c_4$
    and $c_5>c_6$).
  \item There is no local maximum at first order: $c_n$ is eventually
    decreasing, so $w_{n,p}$ is eventually increasing.
  \end{enumerate}
\end{corollary}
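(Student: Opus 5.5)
The plan is to read all three parts off the first-order expansion of Proposition~\ref{prop:cn}, together with the finitely many values in Example~\ref{ex:cn-small} and the monotonicity of Lemma~\ref{lem:decreasing}.

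\emph{Part (i).} Since $w_{n,p}=\tfrac12-\Delta_{n,p}$ (Definition~\ref{def:deficit}), we have
\[
w_{n-1,p}-w_{n,p}=\Delta_{n,p}-\Delta_{n-1,p}.
\]
By Proposition~\ref{prop:cn}, $\Delta_{m,p}=c_m\delta+O(\delta^2)$ for each fixed $m$. Subtracting the two expansions gives $(c_n-c_{n-1})\delta+O(\delta^2)$. The only point to watch is that the $O(\delta^2)$ constant depends on $n$, so all statements are for each fixed $n$ (or finitely many $n$). The index $n=1$ is handled by $w_{0,p}=1$ if it is needed at all.

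\emph{Part (ii).} A strict local minimum at $n=5$ means $w_{4,p}>w_{5,p}$ and $w_{6,p}>w_{5,p}$. By (i), these read
\[
w_{4,p}-w_{5,p}=(c_5-c_4)\delta+O(\delta^2),\qquad
w_{6,p}-w_{5,p}=(c_5-c_6)\delta+O(\delta^2).
\]
From the exact rationals in Example~\ref{ex:cn-small}, both $c_5-c_4=\tfrac{3555-3552}{2048}=\tfrac{3}{2048}$ and $c_5-c_6>0$ are strictly positive. Each difference is therefore a positive constant times $\delta$ plus $O(\delta^2)$. So there is a $\delta_0>0$ such that both differences are positive for $0<\delta<\delta_0$. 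Since only two indices are involved, this threshold is uniform, and the claim is honest rather than merely first-order.

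\emph{Part (iii).} This is interpreted as a first-order statement. A first-order local maximum at $n$ would require $c_n<c_{n-1}$ and $c_n<c_{n+1}$, i.e.\ a strict local minimum of $(c_n)$. The steps are:
\begin{enumerate}[(1)]
\item On $\{1,\dots,5\}$, Example~\ref{ex:cn-small} shows $c$ is strictly increasing, so no $n\le4$ has $c_n<c_{n+1}$ together with $c_n<c_{n-1}$.
\item For $n\ge5$, Lemma~\ref{lem:decreasing} gives $c_{n+1}<c_n$, which rules out a local minimum there as well.
\item The boundary point $n=1$ would need $c_1<c_0$; at $n=0$, $\Delta_0=-\tfrac12$ is not $O(\delta)$, and $w_0=1>w_1$, so $n=1$ is not a maximum.
\end{enumerate}
Hence the sign pattern of $c_n-c_{n-1}$ is $+,+,+,+$ followed by $-$ forever. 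By (i), $w$ first decreases down to $n=5$ and then increases, which gives ``eventually increasing'' at first order.

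The main (and mild) obstacle is being precise about the non-uniformity in $n$ of the error terms. Part (iii) cannot be upgraded to a uniform-in-$n$ statement for a fixed $\delta$ from the first-order expansion alone, and the paper's numerics indicate that local maxima do appear away from $p=\tfrac12$. So I would state (iii) explicitly as a statement about the coefficient sequence $(c_n)$, and (ii) as a genuine statement for sufficiently small $\delta$.
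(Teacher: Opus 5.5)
Your proposal is correct and is the argument the paper leaves implicit (it gives no separate written proof): (i) for $n\ge2$ from $w_{m,p}=\tfrac12-\Delta_{m,p}$ and Proposition~\ref{prop:cn}, (ii) from the exact values $c_4<c_5>c_6$ of Example~\ref{ex:cn-small}, and (iii) from the strict increase of $c$ on $\{1,\dots,5\}$ combined with Lemma~\ref{lem:decreasing}. Your caveat also matches the paper's own Discussion~(a): the $O(\delta^2)$ constants depend on $n$, so (ii) is a genuine small-$\delta$ statement while (iii) is really a statement about $(c_n)$.
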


\begin{remark}
  Numerical computation shows that the sequence $n\mapsto w_{n,p}$
  does develop strict local maxima for $p$ bounded away
  from~$\tfrac12$ (e.g.\ at $n=9$ for $p=0.42$, verified by exact
  rational arithmetic).  This is consistent with the corollary: local
  maxima are a \emph{non-perturbative} phenomenon invisible to the
  first-order expansion.  Van Doorn~\cite{vanDoorn2024}, Section~10,
  conjectures that the optimal strategy for $p<\tfrac12$ is
  ``infinitely complex.''
\end{remark}

\section{Numerical experiments for \texorpdfstring{$p<\tfrac12$}{p<1/2}}\label{sec:numerics}

The first-order analysis of Section~\ref{sec:below} describes
the shape of $n\mapsto w_{n,p}$ only in a shrinking neighbourhood
of $p=\tfrac12$.  Corollary~\ref{cor:shape} guarantees, for every
sufficiently small $\delta>0$, a unique local minimum at $n=5$ and
no local maximum.  Numerical computation of $w_{n,p}$ confirms
this picture for $p$ close to $\tfrac12$, and shows that local
maxima emerge for $p$ bounded away from $\tfrac12$.

All values of $w_{n,p}$ below are computed by the exact Bellman
recursion~\eqref{eq:bellman}, evaluated in arbitrary-precision
rational (or $80$-decimal-digit) arithmetic via
\texttt{mpmath}.  The code is available in the \texttt{simulation/}
subdirectory of the repository.

Figure~\ref{fig:w_vs_n} plots $w_{n,p}$ against $n$ for five
values of $p$.  For $p=0.49$ (first-order regime) the sequence
bottoms out at $n=5$ and increases thereafter, in agreement with
Corollary~\ref{cor:shape}.  For $p=0.42$ and $p=0.45$ a
non-monotone structure emerges: the sequence first decreases,
passes through a local minimum, then rises to a local
\emph{maximum} before decreasing again and converging.

\begin{figure}[h]
  \centering
  \includegraphics[width=0.72\linewidth]{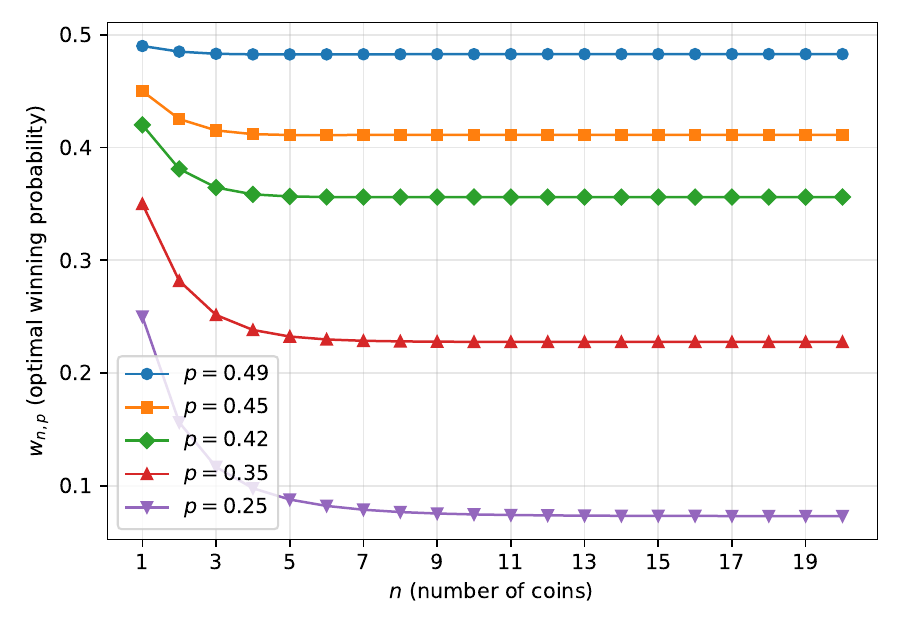}
  \caption{The optimal winning probability $w_{n,p}$ as a
    function of~$n$ for five values of $p<\tfrac12$.  Computed by
    the exact Bellman recursion in $80$-digit arithmetic.  For
    $p=0.49$ only the predicted local minimum at $n=5$ is visible;
    for $p=0.42$ and $p=0.45$ local maxima appear.  For $p=0.35$
    and $p=0.25$ no local extrema occur within $n\le 20$.}
  \label{fig:w_vs_n}
\end{figure}

The non-monotone behaviour is invisible in the plot at the scale
of Figure~\ref{fig:w_vs_n}, because the oscillations are small
compared with the total variation of $w_{n,p}$ in~$n$.
Figure~\ref{fig:local_max_detail} zooms in on $p=0.42$ and
$p=0.45$ and highlights the local minima and maxima found by
direct inspection of the sequence.

\begin{figure}[h]
  \centering
  \includegraphics[width=\linewidth]{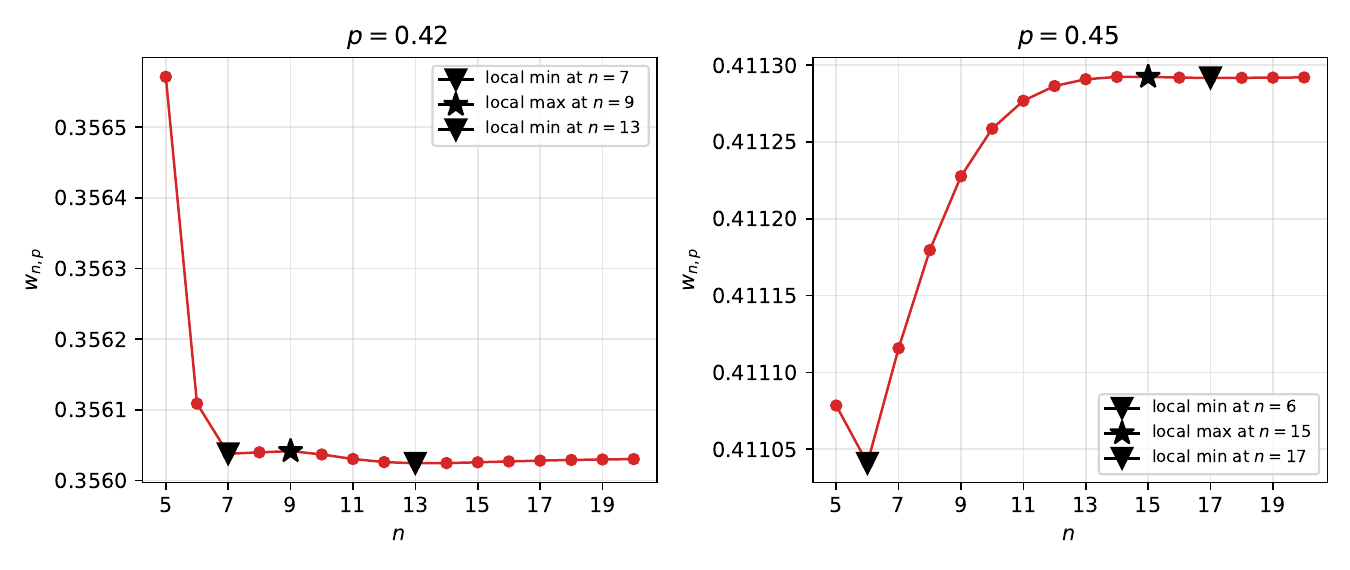}
  \caption{Zoomed-in view of $w_{n,p}$ for $p\in\{0.42,0.45\}$ and
    $n\in[5,20]$.  Stars mark local maxima, inverted triangles
    mark local minima.  At $p=0.42$ there is a local maximum at
    $n=9$; at $p=0.45$ there is a local maximum at $n=15$.}
  \label{fig:local_max_detail}
\end{figure}

Table~\ref{tab:w-values} lists the numerical values of~$w_{n,p}$
for $1\le n\le 20$ and the five values of~$p$ considered in
Figure~\ref{fig:w_vs_n}.  The local extrema are tabulated in
Table~\ref{tab:extrema}.

\begin{table}[h]
\centering
\small
\begin{tabular}{r *{5}{r}}
\toprule
$n$ & $p=0.49$ & $p=0.45$ & $p=0.42$ & $p=0.35$ & $p=0.25$ \\
\midrule
 1 & 0.49000000 & 0.45000000 & 0.42000000 & 0.35000000 & 0.25000000 \\
 2 & 0.48500200 & 0.42525000 & 0.38102400 & 0.28175000 & 0.15625000 \\
 3 & 0.48309103 & 0.41514272 & 0.36450348 & 0.25147259 & 0.11669922 \\
 4 & 0.48258194 & 0.41184019 & 0.35845021 & 0.23820137 & 0.09781647 \\
 5 & \textbf{0.48254059} & 0.41107840 & 0.35657129 & 0.23236999 & 0.08782906 \\
 6 & 0.48259223 & \textbf{0.41104082} & 0.35610901 & 0.22977285 & 0.08215652 \\
 7 & 0.48264193 & 0.41111568 & \textbf{0.35603793} & 0.22859504 & 0.07878468 \\
 8 & 0.48268806 & 0.41117954 & 0.35604004 & 0.22805232 & 0.07672145 \\
 9 & 0.48272442 & 0.41122762 & \textit{0.35604176} & 0.22779987 & 0.07543431 \\
10 & 0.48275111 & 0.41125863 & 0.35603705 & 0.22768230 & 0.07462024 \\
11 & 0.48276987 & 0.41127677 & 0.35603062 & 0.22762785 & 0.07410012 \\
12 & 0.48278262 & 0.41128638 & 0.35602626 & 0.22760288 & 0.07376520 \\
13 & 0.48279104 & 0.41129079 & \textbf{0.35602460} & 0.22759155 & 0.07354821 \\
14 & 0.48279647 & 0.41129231 & 0.35602484 & 0.22758643 & 0.07340694 \\
15 & 0.48279990 & \textit{0.41129237} & 0.35602596 & 0.22758409 & 0.07331459 \\
16 & 0.48280201 & 0.41129184 & 0.35602720 & 0.22758299 & 0.07325404 \\
17 & 0.48280329 & \textbf{0.41129168} & 0.35602833 & 0.22758245 & 0.07321422 \\
18 & 0.48280406 & 0.41129173 & 0.35602926 & 0.22758218 & 0.07318798 \\
19 & 0.48280451 & 0.41129185 & 0.35602997 & 0.22758204 & 0.07317065 \\
20 & 0.48280478 & 0.41129198 & 0.35603049 & 0.22758197 & 0.07315919 \\
\bottomrule
\end{tabular}
\caption{Numerical values of~$w_{n,p}$ for $n\in[1,20]$ and five
  values of $p<\tfrac12$.  Local minima are marked in
  \textbf{bold}, local maxima in \textit{italics}.}\label{tab:w-values}
\end{table}

\begin{table}[h]
\centering
\begin{tabular}{lll}
\toprule
$p$ & Local minima at $n=$ & Local maxima at $n=$ \\
\midrule
$0.49$ & $5$ & --- \\
$0.45$ & $6$, $17$ & $15$ \\
$0.42$ & $7$, $13$ & $9$ \\
$0.35$ & --- & --- \\
$0.25$ & --- & --- \\
\bottomrule
\end{tabular}
\caption{Local extrema of $n\mapsto w_{n,p}$ for
  $n\in[2,19]$.}\label{tab:extrema}
\end{table}

\section{Discussion}\label{sec:discussion}

For $p>\tfrac12$, the situation is much simpler than for
$p<\tfrac12$: the linear recursion of
Corollary~\ref{cor:above-lin} holds for every $n\ge 1$, and the
limit $W(p)=\lim_{n\to\infty}w_{n,p}$ admits the closed
form~\eqref{eq:above-W-formula}; numerically, $W(p)$ is a
smooth, strictly increasing function of $p\in(\tfrac12,1)$ with
$p\le W(p)<1$ (Figure~\ref{fig:W-above}).  The bulk of this
paper is therefore concerned with the more delicate regime
$p<\tfrac12$, where two features of the numerical data in
\S\ref{sec:numerics} deserve further comment.

\emph{(a) Local maxima are a non-perturbative phenomenon.}
Corollary~\ref{cor:shape}(iii) rules out local maxima of
$n\mapsto w_{n,p}$ \emph{to first order in $\delta=\tfrac12-p$}:
for any fixed~$n$, there is a $\delta_0(n)>0$ below which the
predicted sign of $c_{n+1}-c_n<0$ forces~$w_{n,p}<w_{n+1,p}$.
The local maximum at $n=9$ for $p=0.42$ ($\delta=0.08$), and the
one at $n=15$ for $p=0.45$ ($\delta=0.05$), correspond to the
threshold $\delta_0(n)$ being \emph{larger} than the actual
perturbation size at that~$n$, so higher-order terms dominate.
Numerically, $\delta_0(n)$ appears to decay rapidly in~$n$ (we
see no local maxima at all for $p\le 0.35$ within $n\le 20$),
which is consistent with the expected geometric decay of the
first-order constants~$A_n$ and~$B_n=O(n^3/2^n)$.

\emph{(b) The first-order picture holds robustly near $p=\tfrac12$.}
For $p=0.49$ ($\delta=0.01$), the only local extremum within
$n\le 20$ is the predicted minimum at $n=5$, with no spurious
oscillations.  This confirms numerically that the first-order
range-of-validity extends at least to $\delta=0.01$.  Pinning down
the size of $\delta_0(n)$ as a function of~$n$ remains open; we
conjecture that it decays roughly like $C/n^2$ for some absolute
constant~$C$.

\begin{remark}
  The picture for $p\le 0.35$ in our data --- a strictly
  decreasing sequence with no local extrema within $n\le 20$ ---
  is consistent with, but does not prove, monotone decrease on
  all of~$\mathbb{N}$.  Van~Doorn~\cite{vanDoorn2024} proves that
  strategy~\All{} is \emph{not} optimal for $p<\tfrac12$ (there
  exists $n$ with $w_{n,p}\ne b_{n,p}$); equivalently, the
  sequence $n\mapsto w_{n,p}$ is not monotone non-increasing.
  The failure of monotonicity is, however, not always detectable
  as a local extremum in~$n$; it can manifest as strict inequality
  $w_{n,p}>b_{n,p}$ without the sequence~$w_{n,p}$ itself having
  oscillations.  Investigating this gap is beyond the scope of the
  present paper.
\end{remark}

\section*{Acknowledgements}

The author thanks Joachim Breitner for introducing him to the
game as well as for help in Lean~4, Wouter van~Doorn for valuable discussion, 
and Harald Binder for suggesting this new kind of doing mathematics.
This research was supported by the DFG Project-ID 499552394–SFB 1597.

\section*{Disclosure statement}

No potential conflict of interest was reported by the author.

\section*{Data availability statement}

The Lean~4/Mathlib formalization and the numerical code that support
the findings of this study are openly available in the repository
\emph{coins} at \url{https://github.com/pfaffelh/coins}, and are
archived at Zenodo,
\url{https://doi.org/10.5281/zenodo.20489409} (concept DOI; the
version archived for this paper is
\href{https://doi.org/10.5281/zenodo.20489410}{10.5281/zenodo.20489410}).


\appendix

\section{Formal verification in Lean~4}\label{app:lean}

All results of this paper have been formally verified in
Lean~4 (toolchain \texttt{leanprover/lean4:v4.29.0}) using the
Mathlib library at release tag \texttt{v4.29.0} (revision
\texttt{8a178386}).  The formalization is publicly available at
\begin{center}
  \url{https://github.com/pfaffelh/coins}
\end{center}
and builds successfully against this Mathlib version with no open
goals.  The proofs use only the three standard Lean foundational
axioms (\texttt{propext}, \texttt{Classical.choice},
\texttt{Quot.sound}); no additional axioms, \texttt{sorry}s,
\texttt{native\_decide}, \texttt{unsafe} blocks, or
\texttt{opaque} definitions appear.

Table~\ref{tab:lean-map} lists every numbered theorem,
proposition, lemma, and corollary stated in the main text together
with its corresponding Lean theorem.  Each hyperlink points to the
exact line of the current pinned revision
(commit~\href{https://github.com/pfaffelh/coins/tree/c60bcd3}{\texttt{c60bcd3}}).

\renewcommand{\arraystretch}{1.15}
\begin{table}[h]
\centering
\begin{tabular}{@{}lll@{}}
\hline
\textbf{Manuscript} & \textbf{Lean theorem} & \textbf{Location} \\
\hline
Theorem~\ref{thm:half} &
  \texttt{w\_half} &
  \href{https://github.com/pfaffelh/coins/blob/c60bcd3/CoinsLean/CoinsLean/Optimal.lean\#L102}{Optimal.lean:102} \\
Theorem~\ref{thm:above} (joint) &
  \texttt{above\_half} &
  \href{https://github.com/pfaffelh/coins/blob/c60bcd3/CoinsLean/CoinsLean/Above.lean\#L108}{Above.lean:108} \\
Corollary~\ref{cor:above-lin} &
  \texttt{above\_linear\_rec} &
  \href{https://github.com/pfaffelh/coins/blob/c60bcd3/CoinsLean/CoinsLean/AboveLimit.lean\#L71}{AboveLimit.lean:71} \\
Theorem~\ref{thm:above-limit} (existence) &
  \texttt{above\_limit\_exists} &
  \href{https://github.com/pfaffelh/coins/blob/c60bcd3/CoinsLean/CoinsLean/AboveLimit.lean\#L89}{AboveLimit.lean:89} \\
Theorem~\ref{thm:above-limit} (lower bound) &
  \texttt{above\_limit\_ge} &
  \href{https://github.com/pfaffelh/coins/blob/c60bcd3/CoinsLean/CoinsLean/AboveLimit.lean\#L133}{AboveLimit.lean:133} \\
Proposition~\ref{prop:delta-rec} &
  \texttt{deficit\_succ} &
  \href{https://github.com/pfaffelh/coins/blob/c60bcd3/CoinsLean/CoinsLean/Perturbation.lean\#L301}{Perturbation.lean:301} \\
Proposition~\ref{prop:cn} &
  \texttt{deficit\_first\_order} &
  \href{https://github.com/pfaffelh/coins/blob/c60bcd3/CoinsLean/CoinsLean/Perturbation.lean\#L2503}{Perturbation.lean:2503} \\
Example~\ref{ex:cn-small} ($c_1=1$) &
  \texttt{c\_one} &
  \href{https://github.com/pfaffelh/coins/blob/c60bcd3/CoinsLean/CoinsLean/Perturbation.lean\#L76}{Perturbation.lean:76} \\
Example~\ref{ex:cn-small} ($c_2=\tfrac{3}{2}$) &
  \texttt{c\_two} &
  \href{https://github.com/pfaffelh/coins/blob/c60bcd3/CoinsLean/CoinsLean/Perturbation.lean\#L119}{Perturbation.lean:119} \\
Example~\ref{ex:cn-small} ($c_3=\tfrac{27}{16}$) &
  \texttt{c\_three} &
  \href{https://github.com/pfaffelh/coins/blob/c60bcd3/CoinsLean/CoinsLean/Perturbation.lean\#L169}{Perturbation.lean:169} \\
Example~\ref{ex:cn-small} ($c_4=\tfrac{111}{64}$) &
  \texttt{c\_four} &
  \href{https://github.com/pfaffelh/coins/blob/c60bcd3/CoinsLean/CoinsLean/Perturbation.lean\#L186}{Perturbation.lean:186} \\
Example~\ref{ex:cn-small} ($c_5=\tfrac{3555}{2048}$) &
  \texttt{c\_five} &
  \href{https://github.com/pfaffelh/coins/blob/c60bcd3/CoinsLean/CoinsLean/Perturbation.lean\#L211}{Perturbation.lean:211} \\
Example~\ref{ex:cn-small} ($c_6=\tfrac{113337}{65536}$) &
  \texttt{c\_six} &
  \href{https://github.com/pfaffelh/coins/blob/c60bcd3/CoinsLean/CoinsLean/Perturbation.lean\#L244}{Perturbation.lean:244} \\
Lemma~\ref{lem:collapse} (low) &
  \texttt{suffMin\_collapse\_low} &
  \href{https://github.com/pfaffelh/coins/blob/c60bcd3/CoinsLean/CoinsLean/Perturbation.lean\#L1501}{Perturbation.lean:1501} \\
Lemma~\ref{lem:collapse} (high) &
  \texttt{suffMin\_collapse\_high} &
  \href{https://github.com/pfaffelh/coins/blob/c60bcd3/CoinsLean/CoinsLean/Perturbation.lean\#L1507}{Perturbation.lean:1507} \\
Lemma~\ref{lem:lower} &
  \texttt{c\_ge\_27\_16\_full} &
  \href{https://github.com/pfaffelh/coins/blob/c60bcd3/CoinsLean/CoinsLean/Perturbation.lean\#L1483}{Perturbation.lean:1483} \\
Lemma~\ref{lem:decreasing} &
  \texttt{c\_strict\_anti\_from\_five} &
  \href{https://github.com/pfaffelh/coins/blob/c60bcd3/CoinsLean/CoinsLean/Perturbation.lean\#L1488}{Perturbation.lean:1488} \\
Proposition~\ref{prop:linear} &
  \texttt{c\_linear\_rec} &
  \href{https://github.com/pfaffelh/coins/blob/c60bcd3/CoinsLean/CoinsLean/Perturbation.lean\#L1495}{Perturbation.lean:1495} \\
Identity~\eqref{eq:alg-id} &
  \texttt{alg\_id} &
  \href{https://github.com/pfaffelh/coins/blob/c60bcd3/CoinsLean/CoinsLean/Perturbation.lean\#L932}{Perturbation.lean:932} \\
$\sum_{n\ge 13}B_n<\tfrac{1}{8}$ (\S\ref{sec:joint}) &
  \texttt{B\_tail\_bound} &
  \href{https://github.com/pfaffelh/coins/blob/c60bcd3/CoinsLean/CoinsLean/Perturbation.lean\#L864}{Perturbation.lean:864} \\
$\varepsilon_{12}\ge\tfrac{1}{60}$ (\S\ref{sec:joint}) &
  \texttt{c\_twelve\_buffer} &
  \href{https://github.com/pfaffelh/coins/blob/c60bcd3/CoinsLean/CoinsLean/Perturbation.lean\#L2995}{Perturbation.lean:2995} \\
$\sum_{n\ge 13}\delta_n\le\tfrac{1}{200}$ (\S\ref{sec:joint}) &
  \texttt{delta\_tail\_bound} &
  \href{https://github.com/pfaffelh/coins/blob/c60bcd3/CoinsLean/CoinsLean/Perturbation.lean\#L889}{Perturbation.lean:889} \\
Cumulative $\varepsilon$-bound (\S\ref{sec:joint}) &
  \texttt{cum\_eps\_bound} &
  \href{https://github.com/pfaffelh/coins/blob/c60bcd3/CoinsLean/CoinsLean/Perturbation.lean\#L949}{Perturbation.lean:949} \\
Corollary~\ref{cor:L-exists} &
  \texttt{c\_limit\_exists} &
  \href{https://github.com/pfaffelh/coins/blob/c60bcd3/CoinsLean/CoinsLean/Perturbation.lean\#L1517}{Perturbation.lean:1517} \\
Theorem~\ref{thm:limit} &
  \texttt{c\_limit\_formula} &
  \href{https://github.com/pfaffelh/coins/blob/c60bcd3/CoinsLean/CoinsLean/Perturbation.lean\#L2065}{Perturbation.lean:2065} \\
Corollary~\ref{cor:shape}~(i) &
  \texttt{w\_gap\_first\_order} &
  \href{https://github.com/pfaffelh/coins/blob/c60bcd3/CoinsLean/CoinsLean/Perturbation.lean\#L2811}{Perturbation.lean:2811} \\
Corollary~\ref{cor:shape}~(ii) &
  \texttt{w\_local\_min\_at\_five} &
  \href{https://github.com/pfaffelh/coins/blob/c60bcd3/CoinsLean/CoinsLean/Perturbation.lean\#L2847}{Perturbation.lean:2847} \\
Corollary~\ref{cor:shape}~(iii) &
  \texttt{no\_first\_order\_local\_max} &
  \href{https://github.com/pfaffelh/coins/blob/c60bcd3/CoinsLean/CoinsLean/Perturbation.lean\#L2948}{Perturbation.lean:2948} \\
\hline
\end{tabular}
\caption{Correspondence between manuscript results and their Lean
  formalization.  Links point to the exact line of
  commit~\texttt{c60bcd3} on the repository's
  \texttt{main} branch.}\label{tab:lean-map}
\end{table}

The dominated-convergence step in the proof of
Theorem~\ref{thm:limit} (Step~2) is discharged in Lean by
Mathlib's \href{https://leanprover-community.github.io/mathlib4_docs/Mathlib/Analysis/Normed/Group/Tannery.html}{\texttt{tendsto\_tsum\_of\_dominated\_convergence}}
(Tannery's theorem), and the infinite-product convergence in
Step~1 uses
\href{https://leanprover-community.github.io/mathlib4_docs/Mathlib/Analysis/SpecificLimits/Normed.html}{\texttt{Real.multipliable\_one\_add\_of\_summable}}.
These are exactly the two analytic ingredients cited in the proof
of Theorem~\ref{thm:limit}.

\paragraph{Independent verification with the comparator.}
A referee who wants to mechanically verify the formalization
without auditing the full proof modules can do so using the Lean
comparator~\cite{LeanComparator}, a sandboxed kernel-replay tool
maintained by the Lean FRO.  The trust surface is then the two
short files
\href{https://github.com/pfaffelh/coins/blob/main/CoinsLean/Challenge.lean}{\texttt{CoinsLean/Challenge.lean}}
(the sixteen theorem statements with \texttt{sorry} placeholders)
and
\href{https://github.com/pfaffelh/coins/blob/main/CoinsLean/CoinsLean/Defs.lean}{\texttt{CoinsLean/CoinsLean/Defs.lean}}
(the seven shared definitions \texttt{a}, \texttt{w}, \texttt{c},
\texttt{deficit}, \texttt{suffMin}, \texttt{A\_lin}, \texttt{B\_lin}
on which the statements depend).  A successful comparator run
guarantees that the proofs in
\href{https://github.com/pfaffelh/coins/blob/main/CoinsLean/Solution.lean}{\texttt{Solution.lean}}
prove exactly the statements declared in \texttt{Challenge.lean},
use only the three foundational axioms above, and are accepted by
the Lean kernel.  Concretely, after installing the three external
binaries \texttt{landrun}, \texttt{lean4export}, and
\texttt{comparator} (build instructions in the project's
\texttt{README.md}), one runs from inside \texttt{CoinsLean/}
\begin{center}
  \texttt{rm -rf .lake/build \&\& lake env comparator config.json}
\end{center}
where \texttt{config.json} (also at the project root) lists the
sixteen theorem names and the permitted axioms.

\paragraph{Development history.}
The formalization was produced in two stages of interactive work
with Anthropic's Claude.  An earlier session using Claude~Opus~4.6
established the initial scaffolding of the project --- the
Mathlib integration, the strategy-\All{} file
\texttt{Bellman.lean}, and the fair-coin computation in
\texttt{HalfP.lean} (totalling about 100 lines).  The present
paper's formalization --- the optimal-value definition
\texttt{Optimal.lean}, the strategies module, the full proof of
Theorem~\ref{thm:above} in \texttt{Above.lean}, the limit
analysis above $\tfrac12$ in \texttt{AboveLimit.lean}, and the
entire \S4 perturbation module \texttt{Perturbation.lean} (about
$3{,}700$ lines of Lean total) --- was produced in a subsequent
sustained interaction with Claude~Opus~4.7, with the author as
mathematical guide and reviewer.  Each step was decided through
back-and-forth: the author selected which result to tackle next
and acted as reviewer; Claude proposed, drafted, debugged, and
revised proofs until they closed with \texttt{lake build} green and
no \texttt{sorry}.  A full prompt-by-prompt log of the second
session is preserved in
\href{https://github.com/pfaffelh/coins/blob/main/CONVERSATION_LOG.md}{\texttt{CONVERSATION\_LOG.md}},
and a thematic commentary tied to each commit is in
\href{https://github.com/pfaffelh/coins/blob/main/journal.md}{\texttt{journal.md}}.

\end{document}